\begin{document}

\title{A statistical approach to covering lemmas}

\author{\tsname}
\address{\tsaddress}
\email{\tsemail}

\begin{abstract}
We discuss a statistical variant of Ruzsa's covering lemma and use it to show that if $G$ is an Abelian group of bounded exponent and $A \subset G$ has $|A+A| \leq K|A|$ then the subgroup generated by $A$ has size at most $\exp(O(K\log^22K))|A|$, where the constant in the big-$O$ depends on the exponent of the group only.
\end{abstract}

\maketitle

\section{Introduction}

In this note we are concerned with the paper \cite{ruz::01} of Ruzsa in which he proved what is now called the Ruzsa covering lemma, and where he went on to give its prototypical application to the Fre{\u\i}man-Ruzsa theorem.  In this note we shall examine a statistical variant of Ruzsa's covering lemma and show how to use it to improve that same application.
\begin{theorem}[Fre{\u\i}man-Ruzsa theorem for Abelian groups of bounded exponent]\label{thm.fr}
Suppose that $G$ is an Abelian group of exponent\footnote{Recall that an Abelian group has \textbf{exponent} $r$ if $r$ is the minimal natural number such that $rx=0_G$ for every $x \in G$.} $r$ and $\emptyset \neq A \subset G$ has $|A+A| \leq K|A|$.  Then there is a function $F$ depending only on $r$ and $K$ such that $\langle A \rangle$, the group generated by $A$, has size at most $F(r,K)|A|$.
\end{theorem}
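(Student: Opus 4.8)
The plan is to follow Ruzsa's own deduction of the Fre{\u\i}man--Ruzsa theorem from his covering lemma, but to feed the \emph{statistical} covering lemma into the key step; the improvement over the classical $\exp(K^{O(1)})$-type bound will come entirely from the smaller number of translates that the statistical lemma supplies.

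\textbf{Reduction.} Fix $a_0 \in A$ and put $V := A - A$. Then $V$ is symmetric, $0_G \in V$, and (translating $A$ by $-a_0$ and noting $\langle A - a_0 \rangle = \langle V \rangle$) $\langle A \rangle = \langle V \rangle + \langle a_0 \rangle$ with $|\langle a_0 \rangle| \le r$, so it is enough to bound $|\langle V \rangle|$ at the cost of a factor $r$ in $F$. Since $V$ is symmetric and contains $0_G$, the iterated sumsets $nV$ are nested and $\langle V \rangle = \bigcup_{n \ge 1} nV$. Recalling the Plünnecke--Ruzsa inequalities, $|V| = |A - A| \le K^2|A|$ and $|V + V| = |2A - 2A| \le K^4|A|$, so every sumset appearing below has size at most a fixed power of $K$ times $|A|$.

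\textbf{The covering step and its consequence.} The goal is to produce a set $X \subseteq \langle V \rangle$ with $|X| = O(K\log^2 2K)$ such that
\[ V + V \subseteq V + \langle X \rangle , \]
equivalently $2A - 2A \subseteq (A - A) + \langle X \rangle$, equivalently the image of $A - A$ in $\langle V \rangle/\langle X \rangle$ is a subgroup. Granting this, an immediate induction gives $nV \subseteq V + \langle X \rangle$ for all $n \ge 1$ --- the inductive step is $(n+1)V = V + nV \subseteq V + (V + \langle X \rangle) = (V + V) + \langle X \rangle \subseteq V + \langle X \rangle$ --- so $\langle V \rangle = \bigcup_n nV \subseteq V + \langle X \rangle$, and the reverse inclusion being clear we get $\langle V \rangle = (A - A) + \langle X \rangle$, a union of at most $|A - A|$ cosets of $\langle X \rangle$. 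Invoking the bounded exponent for the only essential time,
\[ |\langle V \rangle| \le |A - A| \cdot |\langle X \rangle| \le K^2|A| \cdot r^{|X|} = \exp\big(O(K\log^2 2K)\big)\,|A|, \]
the implied constant depending on $r$ alone, since $r^{|X|} = \exp(|X|\log r)$ and the factor $K^2$ (and the factor $r$ from the Reduction) are absorbed into the exponential --- legitimate precisely because the exponent carries $\log^2 2K$, which is stable under multiplying $K$ by a constant and under adding on $\log K$ or $\log r$, and is also why $2K$ rather than $K$ is the correct argument of the logarithm. This gives the theorem with $F(r,K) = \exp(O_r(K\log^2 2K))$, modulo the covering step.

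\textbf{Producing $X$: the main obstacle.} The whole weight of the argument falls on constructing such an $X$. One cannot hope for the stronger $V + V \subseteq V + X$ with a small \emph{set} $X$ --- for instance if $A$ is a basis of $\langle A \rangle$ then $V + X$ would need $\sim K^2$ translates --- so it is essential to use the subgroup $\langle X \rangle$ and let the chosen elements compound. The construction I would attempt is iterative: over $O(\log 2K)$ rounds one works in the current quotient of $\langle V \rangle$, applies the statistical covering lemma to cover all but a $1/\mathrm{poly}(K)$ proportion of the image of $2A - 2A$ by translates of the image of $A - A$ --- the statistical lemma being exactly what makes the number of translates needed linear in $K$, namely $O(K\log 2K)$, rather than a power of $K$, the $\log 2K$ being the logarithm of the tolerated error --- and adjoins those translates to $X$; a defect quantity such as the logarithm of $|2A - 2A|/|A - A|$ in the current quotient, which is $O(\log 2K)$ to begin with, should drop by a definite amount at each round, so that $O(\log 2K)$ rounds reduce it to nothing and the image of $A - A$ becomes a subgroup, the $O(K\log 2K)$ translates per round summing to $|X| = O(K\log 2K)\cdot O(\log 2K) = O(K\log^2 2K)$. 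Making this rigorous --- arranging the rounds so that the residue left by one round is absorbed in the next without reintroducing a power of $K$ or a spurious $\log|A|$, and keeping control of how the small-doubling hypothesis degrades on passing to the quotients $\langle V \rangle/\langle X \rangle$ --- is the substantive content, and is precisely what the statistical covering lemma is built to handle.
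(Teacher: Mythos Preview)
Your proposal has a genuine gap: the entire ``Producing $X$'' section is a plan, not a proof. You explicitly write that ``making this rigorous\ldots is the substantive content,'' and then do not supply that content. Two specific problems with the sketch as it stands:
\begin{itemize}
\item The statistical covering lemma does \emph{not} say that $X+B$ covers all but a $\delta$-proportion of $A+B$; it says that for every individual $x\in A$ the translate $x+B$ has all but a $\delta$-fraction inside $X+B$. These are different statements, and the former (which your iteration seems to want) does not follow from the latter.
\item No mechanism is given for the claimed drop in the defect $\log(|2A-2A|/|A-A|)$ upon passing to a quotient. Covering most of $2V$ by translates of $V$ does not by itself force the doubling constant of the image of $V$ to shrink; without this the iteration need not terminate in $O(\log 2K)$ rounds, and the whole bookkeeping collapses.
\end{itemize}

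Even granting that some iterative-quotient scheme could be made to work, this is not what the paper does. The paper never constructs a set $X$ with $V+V\subset V+\langle X\rangle$. Instead it combines the statistical covering lemma with a Chang-type dichotomy (Lemma~\ref{lem.chang}) and the generalised-sub-product machinery (Corollary~\ref{cor.tes}) to produce, via Proposition~\ref{prop.int}, a non-negative function $f$ supported on $A+V$ (with $V$ generated by $O(K\epsilon^{-2}\log 2\epsilon^{-1})$ elements) that is $\epsilon$-almost translation-invariant under a positive proportion of $A$. This function is then fed into two pieces of Fourier analysis: Lemma~\ref{lem.annihilate} places a large subset of $A$ inside the annihilator of a large spectrum, and Lemma~\ref{lem.big} bounds the size of that annihilator. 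Petridis' lemma is used to control sumsets with the auxiliary subgroups. None of these ingredients---Chang's lemma, large spectra, annihilators, Petridis---appears in your outline, and they are what actually deliver the $\exp(O_r(K\log^2 2K))$ bound.

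If your aim were only the qualitative Theorem~\ref{thm.fr} (existence of some $F(r,K)$), the classical Ruzsa covering argument sketched in the introduction already suffices and your iterative refinement is unnecessary; but for the quantitative Theorem~\ref{thm.main} you are targeting, the proposal as written is not a proof.
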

We shall take $F(r,K)$ to be the point-wise smallest function such that the conclusion of this theorem holds.

Ruzsa proved that $F(r,K) \leq r^{O(K^4)}$ (in \cite{ruz::01}) and noted by considering sets of independent elements that $F(r,K) \geq r^{\Omega(K)}$; he further conjectured that this was close to optimal in particular suggesting that $F(r,K) \leq r^{O(K)}$.

We shall return to Ruzsa's conjecture and the cases in which it is known shortly, but first we shall take a brief look at the proof of Theorem \ref{thm.fr} from \cite{ruz::01}.  The argument has two parts: first, note that if $X\subset G$ has
\begin{equation}\label{eqn.covering}
A+A \subset X+A,
\end{equation}
then by induction $\langle A \rangle \subset \langle X \rangle + A$ and so
\begin{equation*}
|\langle A\rangle| \leq |\langle X \rangle||A| \leq r^{|X|}|A|.
\end{equation*}
All we need to do now is find a set $X$ satisfying (\ref{eqn.covering}) that is as small as possible -- this will be the second part of the argument.

Unfortunately, if $G$ is finite (but large) and $A$ is chosen uniformly at random from subsets of $G$ of size $|G|/K$ then $|A+A| \leq K|A|$ and with high probability all $X$s satisfying (\ref{eqn.covering}) have $|X|=\Omega(K\log |G|)$.

One can eliminate the problem presented by random sets by considering not $A$, but instead $A-A$ \emph{i.e.} finding a set $X$ such that
\begin{equation}\label{eqn.covering2}
(A-A)+(A-A) \subset X+(A-A).
\end{equation}
It is now possible to find a set $X$ whose size depends only on $K$ -- this is what we now call Ruzsa's Covering Lemma (see \cite[Lemma 2.14]{taovu::}).  Dealing with $A-A$ comes at a price because the set $X$ in (\ref{eqn.covering2}) may need to be as large as $K^3$; this is what has (do date) prevented us from establishing Ruzsa's conjecture by this route.

Instead of working with $A-A$ we shall continue to work with $A$, but (necessarily) relax (\ref{eqn.covering}) to a statistical statement of `almost covering'\footnote{The reader interested primarily in this may skip directly to \S\ref{sec.statcov}.}.  This too comes at a price, because the inductive consequence of (\ref{eqn.covering}) need no longer hold.  Nevertheless, for considerably more work, we are able to handle this and prove the following version of Theorem \ref{thm.fr}.
\begin{theorem}[Fre{\u\i}man-Ruzsa theorem for Abelian groups of bounded exponent]\label{thm.main}
Suppose that $G$ is an Abelian group of exponent $r$ and $\emptyset \neq A \subset G$ has $|A+A| \leq K|A|$.  Then the group generated by $A$ has size at most $\exp(O_r(K\log^{2}2K))|A|$.
\end{theorem}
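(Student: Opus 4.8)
The plan is to keep working with $A$ itself, as in Ruzsa's original argument, rather than with $A-A$, but to relax the exact covering condition $A+A\subseteq X+A$ to a statistical one; the price is that the one-line induction $nA\subseteq(n-1)X+A$ is lost and must be replaced by a more delicate iterative scheme.

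\emph{A statistical covering lemma.} I would first establish a statistical substitute for \cite[Lemma 2.14]{taovu::}: if $\emptyset\ne B\subseteq G$ has $|A+B|\le K|B|$, then for every $\eta\in(0,1]$ there is a set $X$ with $|X|=O(K\log(2/\eta))$ (and just $|X|=O(K)$ when $\eta$ is a fixed constant) such that $X+B$ carries all but an $\eta$-fraction of the mass of $\mu_A*\mu_B$ — equivalently, $a+b\in X+B$ for all but an $\eta$-fraction of the pairs $(a,b)\in A\times B$ — where $\mu_A$ is the uniform probability measure on $A$ and $*$ denotes convolution. The proof should be purely probabilistic: the ``unpopular'' points of $A+B$ (those with far fewer than $|A|/K$ representations $a+b$) carry little mass and may be discarded, while every surviving point lies in a uniformly random translate $x+B$, $x\in A$, with probability $\Omega(1/K)$; hence a random choice of $O(K\log(2/\eta))$ such translates covers all but an $\eta$-fraction of the remaining mass in expectation. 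The gain over Ruzsa's lemma is that $X+B$, rather than $X+B-B$, already suffices, so there is no $K^{3}$-type loss; the cost is the logarithm and the fact that the covering is only approximate and only measure-theoretic.

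\emph{From statistical covering to $\langle A\rangle$.} After a harmless normalisation we may assume $0\in A$, so that the walk with step distribution $\mu_A$ is irreducible and aperiodic on $\langle A\rangle$ and $\mu_A^{*n}$ converges to the uniform measure on $\langle A\rangle$; it therefore suffices to produce a single set $S$ with $|S|\le\exp(O_r(K\log^{2}2K))|A|$ such that $\mu_A^{*n}(S)\ge\tfrac12$ for all $n$, since then $|\langle A\rangle|\le 2|S|$. The elementary fact driving this is that capture is stable under one further convolution: if $\mu_A^{*n}$ puts mass $\ge1-\eta$ on $S$ then $\mu_A^{*(n+1)}$ puts mass $\ge1-\eta$ on $S+A$. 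Left to itself this yields only the useless statement that $\mu_A^{*n}$ lives on $nA$, so at each stage one re-applies the covering lemma (to a suitable sumset of $A$, whose doubling relative to $A$ one controls through Pl\"unnecke--Ruzsa) in order to collapse the capturing set back down to a set of the form $\langle X\rangle+A$ with $X$ of bounded size; this is what contributes the factor $r^{|X|}$ and hence the dependence on the exponent. Choosing the tolerance $\eta$ at each stage small enough that the errors accumulated over all stages stay below $\tfrac12$, and checking that only $O(\log 2K)$ stages are needed before the process stabilises, then gives the bound, with the two logarithms appearing as a product: one from each invocation of the covering lemma, one from the number of stages.

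\emph{The main obstacle.} The real work is all in the second step — controlling how the approximation errors compound. Each convolution enlarges the capturing set by a copy of $A$, which one cannot afford $n$ times, so the covering lemma must be re-applied repeatedly to re-collapse it, and every re-collapse both injects a fresh error and needs the sumset in play to still have bounded doubling. Making this bookkeeping close — in particular calibrating $\eta$ against the number of stages so that the total error stays bounded while the total size of the accumulated covering sets remains $O_r(K\log^{2}2K)$ — is where the ``considerably more work'' promised in the introduction is spent, and it is what produces $K\log^{2}2K$ in the exponent rather than $K\log 2K$.
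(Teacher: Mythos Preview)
Your outline diverges substantially from the paper's argument and, as written, has a genuine gap.

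First, the paper's statistical covering lemma is pointwise rather than average: for $|A+B|\le K|B|$ and $\delta\in(0,1]$ it produces, by a greedy (not random) construction, a set $X\subset A$ with $|X|\le\delta^{-1}(K-1)+1$ such that $|(x+B)\cap(X+B)|\ge(1-\delta)|B|$ for \emph{every} $x\in A$.  There is no $\log(2/\eta)$ factor; the size is linear in $\delta^{-1}$.

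Second, and more importantly, the passage from statistical covering to control of $\langle A\rangle$ is not a random-walk argument at all.  The paper routes through Fourier analysis.  The covering lemma, via a notion of ``generalised sub-products'' of $A^k$, is combined with a Chang-type dichotomy (Lemma~\ref{lem.chang}): either some convolution $1_A\ast\mu_a$ is $\epsilon$-almost translation-invariant under a positive proportion of $A$, or there is a large generalised sub-product of $A^k$ on which $\|1_A\ast\mu_a\|_2^2$ has decayed geometrically.  Corollary~\ref{cor.tes} rules out the second horn once $k$ is large enough, and this yields Proposition~\ref{prop.int}: a subgroup $V$ generated by $O(K\epsilon^{-2}\min\{\log r,\log 2\epsilon^{-1}\})$ elements and a non-negative $f$ supported on $A+V$ that is $\epsilon$-almost $\ell_1$-invariant under $\Omega(\epsilon|A|)$ elements of $A$.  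Almost-invariance then forces $A$ into the annihilator of a large spectrum (Lemma~\ref{lem.annihilate}, where bounded exponent is used essentially), and Lemma~\ref{lem.big} bounds the size of that annihilator.  Petridis' lemma is used to keep doubling under control as one passes between subsets.  The two logarithms arise from taking $\epsilon\sim 1/\log 2K$ inside the $K\epsilon^{-2}\log 2\epsilon^{-1}$ generator bound; there is no ``$O(\log 2K)$ stages'' iteration of the covering lemma.

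The gap in your scheme is precisely the unsupported claim that ``only $O(\log 2K)$ stages are needed before the process stabilises''.  Each convolution by $\mu_A$ enlarges the support by $+A$, and your covering lemma collapses one such step at the cost of a fresh error and a fresh covering set; nothing in your outline forces the accumulated covering sets to stop growing after logarithmically many rounds.  In Ruzsa's exact version stability is automatic because $A+A\subset X+A$ iterates losslessly to $nA\subset (n-1)X+A$, but that is exactly what you have given up.  You need a mechanism that terminates the growth independently of $n$; the paper's mechanism is the Chang dichotomy plus Fourier annihilators, not error bookkeeping on a random walk, and without something playing that role your argument does not close.
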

Finally, before proceeding to the proof we should like to make a few remarks about this result.
\begin{enumerate}
\item The dependence on $r$ is poor with the proof giving
\begin{equation*}
F(r,K) \leq  \exp(O(K(\log 2K)((\log 2K)(\log r) + r^2))).
\end{equation*}
This is far from Ruzsa's conjecture, and means that unless $r$ is smaller than about $\log 2K$ there are better results available \emph{e.g.} those of Konyagin in (\ref{pt.6}).
\item\label{pt.6} The improvement on what was already known is very minor in two respects.  First, Schoen showed in \cite{sch::1} that
\begin{equation*}
F(r,K) \leq \exp(O_r(K^{1+o(1)})),
\end{equation*}
and arguments of Konyagin \cite{kon::1} can be used to show that
\begin{equation*}
F(r,K) \leq \exp(O_r(K\log^{3+o(1)}2K)),
\end{equation*}
so we are only saving a power of $\log 2K$, and less than that unless $r$ is constant.

Secondly, in the case when the exponent is a prime \emph{much} more precise estimates are known.  Indeed, $F(2,K)$ has been completely determined by Zohar \cite{zoh::} using the compression techniques introduced by Green and Tao \cite{gretao::2}, and Zohar's argument was extended by Lovett and Zohar in \cite{lovzoh::} to show that
\begin{equation*}
F(r,K) \leq \frac{{r}^{2K-1}}{2K-1}
\end{equation*}
when $K \geq 8$ and $r$ is a prime, which is essentially tight.
\item Although our results are slight, the conjecture itself is nowhere near as significant as the second conjecture mention in \cite{ruz::01} -- Marton's conjecture, also called the Polynomial Fre{\u\i}man-Ruzsa conjecture.  This has many applications and the arguments of both Konyagin and Schoen mentioned earlier both bear on this much more important conjecture.
\end{enumerate}

\section{A statistical covering lemma}\label{sec.statcov}

Our starting point, then, is Ruzsa's covering lemma which is proved in \cite{ruz::01} although is not explicitly separated out there and appears as a distinct result in \cite[Lemma 2.14]{taovu::}.
\begin{lemma}[Ruzsa's covering lemma]
Suppose that $|A+B| \leq K|B|$.  Then there is a set $X \subset B$ of size at most $K$ such that
\begin{equation*}
A \subset X+(B-B).
\end{equation*}
\end{lemma}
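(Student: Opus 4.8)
The plan is to use the classical greedy ``packing implies covering'' argument; nothing beyond the pigeonhole principle is required. First I would choose $X$ to be a subset of $A$ that is maximal (with respect to inclusion) subject to the constraint that the translates $x+B$, $x \in X$, are pairwise disjoint. Since $A$ is finite such an $X$ exists, and I claim it has the two required properties.

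For the size bound: the sets $x+B$ with $x \in X$ are pairwise disjoint, each has exactly $|B|$ elements, and each is contained in $X+B \subseteq A+B$ because $X \subseteq A$. Hence
\[
|X|\,|B| = \Bigl|\,\bigsqcup_{x \in X}(x+B)\Bigr| \le |A+B| \le K|B|,
\]
and dividing by $|B|$ (which is nonzero, $B$ being nonempty) gives $|X| \le K$.

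For the covering: fix $a \in A$. If $a \in X$ we are done, since $a = a + (b-b) \in X+(B-B)$ for any $b \in B$. If $a \notin X$, then maximality of $X$ forces $X \cup \{a\}$ to fail the disjointness property; as $X$ itself has it, the only possible failure is that $a+B$ meets $x+B$ for some $x \in X$. Choosing such an $x$ and elements $b_1,b_2 \in B$ with $a+b_1 = x+b_2$ yields $a = x + (b_2-b_1) \in X+(B-B)$. Thus $A \subseteq X+(B-B)$, as required.

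I do not anticipate a genuine obstacle: the entire proof is the packing/covering duality together with one counting inequality. The only points deserving care are getting the inequality $|X|\,|B| \le |A+B|$ the right way round (this uses that the translates are pairwise \emph{disjoint}, not merely distinct) and observing that it is the difference set $B-B$, rather than $B$ itself, that is forced to appear in the conclusion -- precisely the loss alluded to in the surrounding discussion, and one that is intrinsic to extracting a covering from a maximal packing.
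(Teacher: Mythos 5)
Your proof is correct and is exactly the argument the paper sketches: take $X\subset A$ maximal with the translates $x+B$, $x\in X$, pairwise disjoint, bound $|X|$ by the packing count inside $A+B$, and use maximality to cover. The only discrepancy is that your $X$ lies in $A$ rather than in $B$ as the statement literally says, but the standard formulation (and the paper's own proof sketch) indeed produces $X\subset A$, so this is a slip in the statement, not a gap in your argument.
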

The classic proof of this is to let $X$ be a maximal $B$-separated subset of $A$, meaning let $X \subset A$ be maximal such that $(x+B) \cap (x'+B) = \emptyset$ for all $x\neq x' \in X$.  A short argument then yields the lemma.

The statistical covering lemma we shall use follows below using essentially the same argument; to the extent that it is different the additional ideas can be seen in the Green-Ruzsa covering lemma \cite[Lemma 2.1]{greruz::}.
\begin{lemma}[Statistical covering lemma]\label{lem.statcov}  Suppose that $|A+B| \leq K|B|$ and $\delta \in (0,1]$ is a parameter.  Then there is a set $X \subset A$ of size at most $\delta^{-1}(K-1)+1$ such that
\begin{equation*}
|(x+B) \cap (X+B)| \geq (1-\delta)|B| \text{ for all } x \in A.
\end{equation*}
\end{lemma}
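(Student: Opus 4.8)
The idea is the greedy/maximal-set argument from the classical covering lemma, but with "disjoint" weakened to "overlap at most $\delta|B|$". I would build $X = \{x_1, x_2, \dots\} \subset A$ one element at a time: having chosen $x_1, \dots, x_j$, if there exists $x \in A$ with $|(x+B)\cap(X_j+B)| < (1-\delta)|B|$ (where $X_j = \{x_1,\dots,x_j\}$) then set $x_{j+1}$ to be such an $x$ and continue; otherwise stop and output $X = X_j$. By construction, when the process terminates the output $X$ satisfies the required covering property for all $x\in A$, so the only thing to prove is that the process stops after at most $\delta^{-1}(K-1)+1$ steps.

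For the size bound I would track the quantity $|X_j + B|$. The first element contributes $|x_1+B| = |B|$. For the inductive step, the defining property of $x_{j+1}$ is that $|(x_{j+1}+B)\cap(X_j+B)| < (1-\delta)|B|$, equivalently the \emph{new} mass added is
\begin{equation*}
|X_{j+1}+B| - |X_j+B| = |(x_{j+1}+B) \setminus (X_j+B)| > \delta|B|.
\end{equation*}
Hence after $j$ steps $|X_j + B| > |B| + (j-1)\delta|B| = (1+(j-1)\delta)|B|$. On the other hand $X_j \subset A$, so $X_j + B \subset A+B$ and thus $|X_j+B| \leq |A+B| \leq K|B|$. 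Combining, $(1+(j-1)\delta)|B| < K|B|$, i.e. $j < \delta^{-1}(K-1)+1$; so when the process is forced to terminate we have $|X| \leq \delta^{-1}(K-1)+1$ as claimed. (One should be slightly careful about strict versus non-strict inequalities and the $j=1$ base case, but these are routine.)

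I do not anticipate a genuine obstacle here — this really is the classical proof with the one modification that the separation condition "$(x+B)\cap(X+B) = \emptyset$" is replaced by the overlap threshold "$< (1-\delta)|B|$", which is exactly what makes the increment $\delta|B|$ rather than $|B|$ at each step and so inflates the bound $K$ to $\delta^{-1}(K-1)+1$. The only place to take mild care is ensuring the stopping condition is stated with the right quantifier (we stop precisely when \emph{every} $x\in A$ is $(1-\delta)$-covered by $X+B$), so that the output genuinely satisfies the conclusion rather than merely the covering elements doing so. Since $A+B$ is finite the process clearly terminates, and the size bound forces it to terminate early.
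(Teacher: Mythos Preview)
Your proposal is correct and is essentially identical to the paper's own proof: the paper also builds $X$ greedily by repeatedly adjoining any $x\in A$ with $|(x+B)\cap(X+B)|<(1-\delta)|B|$, observes that each such step enlarges $|X+B|$ by more than $\delta|B|$, and combines this with $|X+B|\leq K|B|$ to bound the number of steps by $\delta^{-1}(K-1)+1$. The only differences are cosmetic (the paper indexes from $X_0$ rather than $X_1$).
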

\begin{proof}
We construct sets $X_0,\dots,X_k$ iteratively; let $x_0 \in A$ and $X_0:=\{x_0\}$.  Suppose we have constructed $X_i\subset A$.  If there is some $x_{i+1}\in A$ such that
\begin{equation*}
|(x_{i+1}+B) \cap (X_i + B)| < (1-\delta)|B|
\end{equation*}
then let $X_{i+1}:=X_i \cup \{x_{i+1}\}$; if there is no such $x_{i+1}$ then terminate with $X:=X_i$.  It follows by induction that $|X_i| \leq i+1$.  Moreover, if there is a suitable $x_{i+1}$ then
\begin{eqnarray*}
|(x_{i+1}+B) \setminus (X_i + B)| & \geq & |x_{i+1}+B| - |(x_{i+1}+B) \cap (X_i + B)|\\ & > & |B| - (1-\delta)|B| = \delta |B|.
\end{eqnarray*}
It follows that
\begin{eqnarray*}
|X_i+B| &\geq & |(X_i+B) \setminus (X_{i-1}+B)| + |(X_{i-1}+B) \setminus (X_{i-2}+B)|\\ & & +\dots + |(X_1+B)\setminus (X_0+B)| + |X_0+B|\\ &\geq &|(x_i+B) \setminus (X_{i-1}+B)| + |(x_{i-1}+B) \setminus (X_{i-2}+B)|\\ & & +\dots + |(x_1+B)\setminus (X_0+B)| + |x_0+B|\\ & > & \delta |B|.i +|B|.
\end{eqnarray*}
On the other hand $X_i \subset A$ and so $|X_i+B| \leq K|B|$, and hence the iteration terminates for some $i < \delta^{-1}(K-1)$, and which point we have
\begin{equation*}
|(x+B) \cap (X_i + B)| \geq(1-\delta)|B| \text{ for all } x \in A.
\end{equation*}
The bound follows since $X=X_i$ and $|X_i| \leq i+1 < \delta^{-1}(K-1)+1$ as required.
\end{proof}
The case $\delta=1$ is essentially the argument for Ruzsa's covering lemma since
\begin{equation*}
|(x+B)\cap (X+B)|>0 \text{ if and only if } x \in X+B-B.
\end{equation*}
In this note, however, we are interested in the case of small $\delta$ and $A=B$, and for convenience we shall wrap up the conclusion of the lemma in the case $A=B$ in a definition.  We say that $A$ is \textbf{$(1-\delta)$-covered by $X$} if
\begin{equation*}
|(x+A) \cap (X+A)| \geq(1-\delta) |A| \text{ for all }x \in A.
\end{equation*}
Thus $A$ is $1$-covered by $X$ if and only if $A+A\subset X+A$, and the Statistical covering lemma tells us that if $|A+A| \leq K|A|$ then $A$ is $(1-\delta)$-covered by a set $X$ of size $O(\delta^{-1}K)$.

To use this definition it will be useful to relate it to convolution.  Define the translation operator on $\ell_p(G)$ for $p\in [1,\infty]$ in the usual way \emph{viz}.
\begin{equation*}
\tau_x:\ell_p(G) \rightarrow \ell_p(G); f \mapsto (y \mapsto f(x+y)).
\end{equation*}
Given two functions $f,g \in \ell_2(G)$ we then define their \textbf{convolution} point-wise by
\begin{equation*}
f\ast g(x):=\langle \tau_{-x}(f),g\rangle_{\ell_2(G)} = \sum_{y+z=x}{f(y)g(z)} \text{ for all }x \in G.
\end{equation*}

Now, if $A$ is $(1-\delta)$-covered by $X$ then it is possible to show (when $\delta=0$ this is essentially the induction we discussed in the introduction) that
\begin{equation}\label{eqn.apxcov}
\langle \overbrace{1_A \ast \dots \ast 1_A}^{(k+1) \text{ times}}, 1_{kX+A}\rangle_{\ell_2(G)} \geq (1-\delta)^k|A|^{k+1}.
\end{equation}
This already captures a great deal about $A$, in particular that $A$ has a lot of \emph{very} large Fourier coefficients.

Inequalities of the form (\ref{eqn.apxcov}) are not quite enough for us though, and to see what we need re-write the left hand side as a sum over $A^k$:
\begin{equation*}
\langle \overbrace{1_A \ast \dots \ast 1_A}^{(k+1) \text{ times}}, 1_{kX+A}\rangle_{\ell_2(G)} =\sum_{x \in A^k}{\langle\tau_{-(x_1+\dots + x_k)}(1_{A}),1_{kX+A}\rangle_{\ell_2(G)}}.
\end{equation*}
We shall want lower bounds on sums over sets that are slightly more general than the product set $A^k$; the definition of these is our next task.

\subsection{Generalised product sets} We think of $A^k$ as a product of uniform probability spaces, so that it is itself endowed with the uniform probability measure.  A set $\mathcal{A}\subset A^k$ is said to \textbf{contain a $\nu$-large generalised sub-product of $A^k$} for some $\nu \in (0,1]^{k}$ if there are sets $(\mathcal{A}_i)_{i=0}^k$ such that
\begin{enumerate}
\item\label{ax.1} (Start and end) $\mathcal{A}_0=\{()\}$ and $\mathcal{A}_k \subset \mathcal{A}$;
\item\label{ax.2} (Powers) whenever $1\leq i \leq k$ we have $\mathcal{A}_i \subset A^i$;
\item\label{ax.3} (Sub-martingale) whenever $1 \leq i \leq k$ we have the point-wise inequality
\begin{equation*}
\E_{a_{i}}{1_{\mathcal{A}_{i}}(a_1,\dots,a_{i})} \geq \nu_{i} 1_{\mathcal{A}_{i-1}}(a_1,\dots,a_{i-1}).
\end{equation*}
\end{enumerate}
In many cases we shall have $\nu$ a constant vector: if $\epsilon \in (0,1]$ then we say that $\mathcal{A}$ contains an $\epsilon$-large generalised sub-product of $A^k$ if it contains a $\nu$-large generalised sub-product of $A^k$ for $\nu=\nu(\epsilon) \in (0,1]^k$ defined by $\nu_i=\epsilon$ for $1 \leq i \leq k$.

To explain the heading for \ref{ax.3} write $X_i:=1_{\mathcal{A}_i \times A^{k-i}}$ considered as random variables on $A^k$, and $Y_k,\dots,Y_1$ for the coordinate projection maps $A^k\rightarrow A; a \mapsto a_i$ considered as random elements.  \ref{ax.3} is simply the statement that $((\prod_{j=1}^i{\nu_j})X_i)_{i=0}^k$ a (finite) sub-martingale with respect to $(Y_i)_{i=1}^k$.  That being said we shall make no use of any theory of martingales, and the reader not familiar with this may safely ignore this remark.
\begin{example}[Sub-products]\label{ex.prod}
If $A_1,\dots,A_k \subset A$ have size at least $\nu_1|A|,\dots, \nu_k|A|$ respectively then $\mathcal{A}:=A_1\times \dots \times A_k$ contains a $\nu$-large generalised sub-product of $A^k$.  To see this define
\begin{equation*}
\mathcal{A}_i:=A_1\times \dots \times A_i \text{ for } 0 \leq i \leq k.
\end{equation*}
We certainly have \ref{ax.1} (with the convention that the empty product is just the set containing the empty tuple) and \ref{ax.2}; it remains to check that
\begin{equation*}
\E_{a_{i}}{1_{\mathcal{A}_{i}}(a_1,\dots,a_{i})}  = \E_{a_{i}}{1_{\mathcal{A}_{i-1}}(a_1,\dots,a_{i-1})1_{A_{i}}(a_{i})} \geq \nu_{i} 1_{\mathcal{A}_{i-1}}(a_1,\dots,a_{i-1})
\end{equation*}
whenever $1 \leq i \leq k$ which gives \ref{ax.3} as required.
\end{example}
There are two properties of sub-products which are useful to us and which extend to generalised sub-products.  The first is that it is easy to compute the size of sub-products; we do this for generalised sub-products in Lemma \ref{lem.basicprop}.  The second is that we can apply the inclusion-exclusion principle fibre-wise.  To see why this is useful consider the example when $A_1,\dots,A_k,A_1',\dots,A_k' \subset A$ all have size at least $(1-\eta)|A|$.  Then
\begin{equation*}
|(A_1\times \dots \times A_k) \cap (A_1' \times \dots \times A_k')|\geq(1-2\eta)^k|A|^k
\end{equation*}
by the inclusion-exclusion principle applied in each fibre.  But if $1/k \lll \eta \lll 1$ this is much better than the bound we get from applying the inclusion-exclusion principle directly, ignoring the product structure, in which case we get
\begin{equation*}
|(A_1\times \dots \times A_k) \cap (A_1' \times \dots \times A_k')|\geq (2(1-\eta)^k-1)|A|^k.
\end{equation*}
We address this issue in Lemma \ref{lem.heredint}.
\begin{lemma}[Generalised sub-products, size bound]\label{lem.basicprop}
Suppose that $\mathcal{A}$ contains a $\nu$-large generalised sub-product of $A^k$. Then $|\mathcal{A}| \geq (\prod_{i=1}^k{\nu_i})|A|^k$.
\end{lemma}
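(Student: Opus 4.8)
The plan is to unwind the definition: fix sets $(\mathcal{A}_i)_{i=0}^k$ witnessing that $\mathcal{A}$ contains a $\nu$-large generalised sub-product of $A^k$, so that \ref{ax.1}--\ref{ax.3} hold, and track the densities
\begin{equation*}
\mu_i := \E_{(a_1,\dots,a_i) \in A^i}{1_{\mathcal{A}_i}(a_1,\dots,a_i)} = \frac{|\mathcal{A}_i|}{|A|^i} \quad (0 \leq i \leq k).
\end{equation*}
By \ref{ax.1} the space $A^0$ is a single point and $\mathcal{A}_0=\{()\}$, so $\mu_0=1$.

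Next I would take expectations in \ref{ax.3} over the remaining coordinates. Averaging the point-wise inequality
\begin{equation*}
\E_{a_i}{1_{\mathcal{A}_i}(a_1,\dots,a_i)} \geq \nu_i 1_{\mathcal{A}_{i-1}}(a_1,\dots,a_{i-1})
\end{equation*}
over $(a_1,\dots,a_{i-1}) \in A^{i-1}$ and using the tower property of expectation on the left gives $\mu_i \geq \nu_i \mu_{i-1}$ for each $1 \leq i \leq k$. Iterating this from $\mu_0=1$ yields $\mu_k \geq \prod_{i=1}^k{\nu_i}$, i.e. $|\mathcal{A}_k| \geq (\prod_{i=1}^k{\nu_i})|A|^k$. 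Finally, by \ref{ax.1} we have $\mathcal{A}_k \subset \mathcal{A}$, so $|\mathcal{A}| \geq |\mathcal{A}_k|$ and the claimed bound follows.

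This is essentially the computation already carried out for sub-products in Example \ref{ex.prod}, now run abstractly from the sub-martingale hypothesis rather than from an explicit product structure, so I do not anticipate a genuine obstacle; the only point requiring a little care is the bookkeeping at the ends — that the empty product $A^0$ carries the trivial (mass one) probability space so that the induction starts correctly at $\mu_0 = 1$, and that one only needs $\mathcal{A}_k \subset \mathcal{A}$ (not equality) to transfer the lower bound to $|\mathcal{A}|$.
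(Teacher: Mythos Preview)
Your proof is correct and follows essentially the same route as the paper's own proof: average the sub-martingale inequality \ref{ax.3} over the remaining coordinates to obtain $\mu_i \geq \nu_i\,\mu_{i-1}$, iterate from $\mu_0=1$, and then use $\mathcal{A}_k \subset \mathcal{A}$ from \ref{ax.1} to pass to $|\mathcal{A}|$. The paper writes it slightly more tersely without naming the densities $\mu_i$, but the argument is the same.
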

\begin{proof}
Start by noting that for $1 \leq i \leq k$ we have, by \ref{ax.3} and linearity of expectation that
\begin{equation*}
\E_{a_1,\dots,a_{i}}{1_{\mathcal{A}_i}(a_1,\dots,a_{i})} \geq \nu_{i} \E_{a_1,\dots,a_{i-1}}{1_{\mathcal{A}_{i-1}}(a_1,\dots,a_{i-1})}.
\end{equation*}
But then by induction and \ref{ax.1} we have
\begin{equation*}
\E{1_{\mathcal{A}}} \geq \E{1_{\mathcal{A}_k}} \geq \prod_{i=1}^k{\nu_i}\E{1_{\mathcal{A}_0}} =  \prod_{i=1}^k{\nu_i},
\end{equation*}
and the result is proved.
\end{proof}
\begin{lemma}[Generalised sub-products, intersections]\label{lem.heredint}
Suppose that $\eta,\eta' \in [0,1)^k$ are such that $\eta+\eta' \in [0,1)^k$ and $\mathcal{A}$ and $\mathcal{A}'$ contain $(1-\eta)$-large and $(1-\eta')$-large generalised sub-products of $A^k$ respectively.  Then $\mathcal{A}\cap \mathcal{A}'$ contains a $(1-(\eta+\eta'))$-large generalised sub-product of $A^k$.
\end{lemma}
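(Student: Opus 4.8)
The plan is to intersect the two witnessing families coordinate by coordinate. Let $(\mathcal{A}_i)_{i=0}^k$ and $(\mathcal{A}_i')_{i=0}^k$ witness that $\mathcal{A}$ and $\mathcal{A}'$ contain a $(1-\eta)$-large and a $(1-\eta')$-large generalised sub-product of $A^k$ respectively, and I would set $\mathcal{B}_i:=\mathcal{A}_i\cap\mathcal{A}_i'$ for $0\le i\le k$, claiming that $(\mathcal{B}_i)_{i=0}^k$ witnesses that $\mathcal{A}\cap\mathcal{A}'$ contains a $(1-(\eta+\eta'))$-large generalised sub-product of $A^k$. Axioms \ref{ax.1} and \ref{ax.2} are immediate: $\mathcal{B}_0=\{()\}$, $\mathcal{B}_k=\mathcal{A}_k\cap\mathcal{A}_k'\subset\mathcal{A}\cap\mathcal{A}'$, and $\mathcal{B}_i\subset A^i$.

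The content is in checking the sub-martingale axiom \ref{ax.3}. I would fix $1\le i\le k$ and a tuple $(a_1,\dots,a_{i-1})$; if $1_{\mathcal{B}_{i-1}}(a_1,\dots,a_{i-1})=0$ the desired inequality is trivial since its left-hand side is nonnegative, so one may assume $(a_1,\dots,a_{i-1})\in\mathcal{A}_{i-1}\cap\mathcal{A}_{i-1}'$. Then I would apply inclusion--exclusion in the fibre over $(a_1,\dots,a_{i-1})$: regarding $1_{\mathcal{A}_i}(a_1,\dots,a_{i-1},\cdot)$ and $1_{\mathcal{A}_i'}(a_1,\dots,a_{i-1},\cdot)$ as functions of $a_i$ alone, the pointwise bound $1_{S\cap T}\ge 1_S+1_T-1$ for indicator functions, averaged over $a_i$, gives
\[
\E_{a_i} 1_{\mathcal{B}_i}(a_1,\dots,a_i) \ge \E_{a_i} 1_{\mathcal{A}_i}(a_1,\dots,a_i) + \E_{a_i} 1_{\mathcal{A}_i'}(a_1,\dots,a_i) - 1.
\]
Because $(a_1,\dots,a_{i-1})\in\mathcal{A}_{i-1}$, axiom \ref{ax.3} for $\mathcal{A}$ bounds the first term below by $1-\eta_i$, and likewise the second is at least $1-\eta_i'$, so the right-hand side is at least $1-(\eta_i+\eta_i')$. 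This is exactly the inequality required of $\mathcal{B}$, and the hypothesis $\eta+\eta'\in[0,1)^k$ ensures $1-(\eta_i+\eta_i')\in(0,1]$, so the witnessing parameter vector is genuinely in $(0,1]^k$.

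I do not expect a genuine obstacle here: the whole design of the generalised-sub-product notion is precisely to make inclusion--exclusion available one coordinate at a time. The only point needing care is that the sub-martingale inequalities for $\mathcal{A}$ and $\mathcal{A}'$ are only valid on the fibres lying over $\mathcal{A}_{i-1}$ and $\mathcal{A}_{i-1}'$ respectively — which is why one must restrict attention to tuples in $\mathcal{B}_{i-1}=\mathcal{A}_{i-1}\cap\mathcal{A}_{i-1}'$, and why the statement combines the two families only at equal indices, giving the componentwise sum $\eta+\eta'$.
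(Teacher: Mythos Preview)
Your proposal is correct and is essentially the paper's own argument: define the witnessing family for the intersection by $\mathcal{B}_i=\mathcal{A}_i\cap\mathcal{A}_i'$, note that axioms \ref{ax.1} and \ref{ax.2} are immediate, and verify axiom \ref{ax.3} by applying the pointwise inclusion--exclusion bound $1_{S\cap T}\ge 1_S+1_T-1$ in each fibre together with the sub-martingale inequalities for $\mathcal{A}$ and $\mathcal{A}'$. The only cosmetic difference is that the paper carries the indicator $1_{\mathcal{B}_{i-1}}$ through a single chain of inequalities rather than splitting into the cases $(a_1,\dots,a_{i-1})\in\mathcal{B}_{i-1}$ and $(a_1,\dots,a_{i-1})\notin\mathcal{B}_{i-1}$ as you do.
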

\begin{proof}
We start by defining $(\mathcal{A} \cap \mathcal{A}')_i:=\mathcal{A}_i \cap \mathcal{A}'_i$ for $0 \leq i \leq k$ so that \ref{ax.1} and \ref{ax.2} are satisfied for $\mathcal{A}\cap \mathcal{A}'$.  With explanations of the passage from line to line in the following paragraph, it remains to note that
\begin{eqnarray*}
\E_{a_{i}}{1_{(\mathcal{A} \cap \mathcal{A}')_{i}}(a_1,\dots,a_{i})} & = & \E_{a_{i}}{1_{\mathcal{A}_{i}}(a_1,\dots,a_{i})1_{\mathcal{A}'_{i}}(a_1,\dots,a_{i})}\\ & \geq  &  1_{(\mathcal{A} \cap \mathcal{A}')_{i-1}}(a_1,\dots,a_{i-1}) \E_{a_{i}}{1_{\mathcal{A}_{i}}(a_1,\dots,a_{i})1_{\mathcal{A}'_{i}}(a_1,\dots,a_{i})}\\ & \geq & 1_{(\mathcal{A} \cap \mathcal{A}')_{i-1}}(a_1,\dots,a_{i-1})\\ & & \times \E_{a_{i}}{(1_{\mathcal{A}_{i}}(a_1,\dots,a_{i})+1_{\mathcal{A}'_{i}}(a_1,\dots,a_{i})-1)}\\ & \geq & 1_{(\mathcal{A} \cap \mathcal{A}')_{i-1}}(a_1,\dots,a_{i-1})\\ & & \times ((1-\eta_i)1_{\mathcal{A}_{i-1}}(a_1,\dots,a_{i-1}) + (1-\eta'_i)1_{\mathcal{A}'_{i-1}}(a_1,\dots,a_{i-1}) -1)\\ & = & 1_{(\mathcal{A} \cap \mathcal{A}')_{i-1}}(a_1,\dots,a_{i-1})((1-\eta_i) + (1-\eta'_i) -1)\\ & = &1_{(\mathcal{A} \cap \mathcal{A}')_{i-1}}(a_1,\dots,a_{i-1})(1-(\eta_i +\eta'_i))
\end{eqnarray*}
for $1 \leq i \leq k$.  The passage from the first to the second line is because $1_{(\mathcal{A}\cap \mathcal{A}')_{i-1}}$ is Boolean and the two expectations are non-negative; from the second to the third is that $xy \geq x+y-1$ whenever $x,y \in \{0,1\}$; from the third to the fourth uses linearity of expectation and \ref{ax.3} for $\mathcal{A}$ and $\mathcal{A}'$; finally the fourth to the fifth uses the fact that $1_{(\mathcal{A}\cap \mathcal{A}')_{i-1}}1_{\mathcal{A}_{i-1}}=1_{(\mathcal{A}\cap \mathcal{A}')_{i-1}}$ by definition and similarly for $\mathcal{A}'$.  The result is proved.
\end{proof}
The next lemma captures how the notion of $(1-\delta)$-covering (from the start of this section) is related to generalised sub-products.  If $A$ is $1$-covered by a set $X$ containing $0_G$ then a short induction tells us that for any $x \in A$ we have
\begin{equation*}
\{a \in A^k: x+\sum_{i}{a_i} \in kX+A\} = A^k;
\end{equation*}
the set has a product structure.  If it is almost $1$-covered then it has a generalised sub-product structure.  Although long-winded the proof below is straight-forward.
\begin{lemma}\label{lem.heredwant}
Suppose that $A$ is $(1-\delta)$-covered by $X\ni 0_G$ for some $\delta \in [0,1)$.  Then for any $x \in A$, $k \in \N$ and $S \subset [k]$, the set
\begin{equation*}
\mathcal{A}^S:=\{a \in A^k: x+\sum_{s \in S}{a_s} \in \overbrace{X+\dots +X}^{|S| \text{ times}}+A\}
\end{equation*}
contains a $\nu(S)$-large generalised sub-product where $\nu(S)_i=1-\delta1_S(i)$.
\end{lemma}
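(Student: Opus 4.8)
The plan is to build the sets $(\mathcal{A}^S_i)_{i=0}^k$ by hand, tracking the partial sum $x + \sum_{s \in S \cap [i]}{a_s}$ together with how far along we are in assembling an element of $\underbrace{X + \dots + X}_{|S| \text{ times}} + A$. Concretely I would set, for $0 \le i \le k$,
\begin{equation*}
\mathcal{A}^S_i := \Bigl\{(a_1,\dots,a_i) \in A^i : x + \sum_{s \in S \cap [i]}{a_s} \in \underbrace{X + \dots + X}_{|S \cap [i]| \text{ times}} + A\Bigr\},
\end{equation*}
with the usual convention that the empty sumset is $\{0_G\}$, so that the $i=0$ instance of the membership condition reads simply $x \in A$, which holds. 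Axiom \ref{ax.2} is then immediate, and for \ref{ax.1} we have $\mathcal{A}^S_0 = \{()\}$ (as $x \in A$) and $\mathcal{A}^S_k = \mathcal{A}^S$ (as $S \cap [k] = S$).

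The substance is axiom \ref{ax.3}, and I would split on whether $i \in S$. If $i \notin S$ then $S \cap [i] = S \cap [i-1]$, so on $A^i$ the condition defining $\mathcal{A}^S_i$ does not mention $a_i$ and is literally the condition defining $\mathcal{A}^S_{i-1}$; hence $1_{\mathcal{A}^S_i}(a_1,\dots,a_i) = 1_{\mathcal{A}^S_{i-1}}(a_1,\dots,a_{i-1})$ on $A^i$, and averaging over $a_i \in A$ gives equality in \ref{ax.3} (here $\nu(S)_i = 1$).

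If $i \in S$, the inequality in \ref{ax.3} is trivial unless $(a_1,\dots,a_{i-1}) \in \mathcal{A}^S_{i-1}$, so assume that, put $m := |S \cap [i-1]|$ and $y := x + \sum_{s \in S \cap [i-1]}{a_s}$, and fix one decomposition $y = w + a'$ with $w$ a sum of $m$ elements of $X$ and $a' \in A$ (possible precisely because $(a_1,\dots,a_{i-1}) \in \mathcal{A}^S_{i-1}$). For every $a_i \in A$ with $a' + a_i \in X + A$ we get
\begin{equation*}
x + \sum_{s \in S \cap [i]}{a_s} = y + a_i = w + (a' + a_i) \in \underbrace{X + \dots + X}_{m} + X + A = \underbrace{X + \dots + X}_{|S \cap [i]|} + A,
\end{equation*}
since $|S \cap [i]| = m+1$; so each such $a_i$ places $(a_1,\dots,a_i)$ in $\mathcal{A}^S_i$. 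The number of $a_i \in A$ with $a' + a_i \in X + A$ is exactly $|(a' + A) \cap (X + A)|$, which is at least $(1-\delta)|A|$ because $a' \in A$ and $A$ is $(1-\delta)$-covered by $X$; dividing by $|A|$ yields $\E_{a_i}{1_{\mathcal{A}^S_i}(a_1,\dots,a_i)} \ge 1-\delta = \nu(S)_i$ on the event $(a_1,\dots,a_{i-1}) \in \mathcal{A}^S_{i-1}$, which is \ref{ax.3}, and the lemma follows.

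I expect the bookkeeping with $S \cap [i]$, $|S \cap [i]|$ and the empty-sumset convention to be the only fiddly part; the one genuinely substantive move is the observation in the $i \in S$ case that it suffices to control a single representative $a'$ of $y$ (rather than all of $y$) and to feed the translate $a' + A$ into the covering hypothesis — this is what converts the qualitative statement "$A$ is $(1-\delta)$-covered by $X$" into the quantitative factor $1-\delta$ in the sub-martingale bound, and it is the exact analogue of the one-step induction used for genuine covering in the remark preceding the lemma.
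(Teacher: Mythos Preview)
Your proof is correct, and the decisive step --- choosing one representative $a'\in A$ of the partial sum and applying the $(1-\delta)$-covering hypothesis to $a'+A$ --- is exactly the move the paper makes. The organisation differs, though: the paper builds $(\mathcal{A}^S_i)_i$ by an outer induction on the largest element $j$ of $S$, setting $\mathcal{A}^S_i=\mathcal{A}^{S'}_i$ for $i<j$ (with $S'=S\setminus\{j\}$), treating $i=j$ separately, and padding by $A^{k-j}$ for $i>j$; it then has to argue somewhat indirectly (via the $\nu(S')_i=1$ factors for $i\geq j$) that membership in $\mathcal{A}^S_{j-1}$ really does yield the decomposition of the partial sum needed at step $j$. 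Your direct definition via $S\cap[i]$ makes that implication immediate and dispenses with the extra induction layer. The only cost is that your $\mathcal{A}^S_i$ can be strictly larger than the paper's (you do not force prefixes to lie in $\mathcal{A}^S_{i-1}$), but this is harmless since axiom~\ref{ax.3} only demands a lower bound.
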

\begin{proof}
For each $S$ we shall construct sets $(\mathcal{A}^S_i)_{i=0}^k$ satisfying \ref{ax.1} (for $\mathcal{A}^S$), \ref{ax.2}, and \ref{ax.3} with $\nu(S)$.  When $S=\emptyset$ we have
\begin{equation*}
\{a \in A^k: x+\sum_{s \in S}{a_s} \in \overbrace{X+\dots +X}^{|S| \text{ times}}+A\} = \{a \in A^k: x \in A\}=A^k
\end{equation*}
since $0_G \in X$ and $x \in A$, and so we have suitable sets $(\mathcal{A}^S_i)_{i=0}^k$ from Example \ref{ex.prod}, namely $\mathcal{A}^S_i=A^i$.

Now, suppose that $S$ is non-empty, that the largest element of $S$ is $j$, and that we have constructed suitable sets $(\mathcal{A}^{S'}_i)_{i=0}^k$ for $S':=S \setminus \{j\}$.  We shall construct the sets $(\mathcal{A}^S_i)_{i=0}^k$ in three stages depending on the value of $i$, verifying \ref{ax.2} and \ref{ax.3} as we go, and \ref{ax.1} at the end.
\begin{enumerate}
\item ($i<j$) Put
\begin{equation*}
\mathcal{A}^S_i:=\mathcal{A}^{S'}_i \text{ whenever } i < j
\end{equation*}
so that \ref{ax.2} and \ref{ax.3} are satisfied whenever $i<j$ since $\nu(S)_i=\nu(S')_i$ in that range.
\item ($i=j$) Put
\begin{equation*}
\mathcal{A}^S_j:=\{(a_1,\dots,a_{j-1},a) \in  \mathcal{A}^S_{j-1}\times A : x+\sum_{s \in S'}{a_s} +a  \in \overbrace{X+\dots +X}^{|S| \text{ times}}+A\}
\end{equation*}
so that \ref{ax.2} holds.  It remains to verify \ref{ax.3}.  If $(a_1,\dots,a_{j-1}) \in \mathcal{A}^S_{j-1}$ then
\begin{equation*}
\E_{a_{j},\dots,a_{k}}{1_{\mathcal{A}^{S'}_k}(a_1,\dots,a_k)} \geq \nu_k(S')\dots \nu_j(S')1_{\mathcal{A}^{S'}_{j-1}}(a_1,\dots,a_{j-1}) = 1
\end{equation*}
by induction and since $\nu_i(S')=1$ for all $j \leq i \leq k$ as $j$ was the largest element of $S$.  It follows that there is some $a_{j},\dots,a_k$s such that $(a_1,\dots,a_k) \in \mathcal{A}^{S'}_k\subset \mathcal{A}^{S'}$.  From the definition of $\mathcal{A}^{S'}$ this means that
\begin{equation*}
x+\sum_{s \in S'}{a_s} \in \overbrace{X+\dots +X}^{|S'| \text{ times}}+A,
\end{equation*}
which we note does not depend on the particular choice of $a_{j},\dots,a_k$ since the largest element of $S'$ is less than $j$.  We conclude that there is some $U(a_1,\dots,a_{j-1}) \in X+\dots +X$ (where the sum is $|S'|$-fold) such that
\begin{equation*}
-U(a_1,\dots,a_{j-1})+x+\sum_{s \in S'}{a_s} \in A.
\end{equation*}
Since $A$ is $(1-\delta)$-covered by $X$ there are at least $(1-\delta)|A|$ elements $a \in A$ such that
\begin{equation*}
-U(a_1,\dots,a_{j-1})+x+\sum_{s \in S'}{a_s} +a  \in X+A,
\end{equation*}
and so at least $(1-\delta)|A|$ elements $a \in A$ such that
\begin{equation*}
x+\sum_{s \in S'}{a_s} +a  \in \overbrace{X+\dots +X}^{|S| \text{ times}}+A;
\end{equation*}
written another way
\begin{equation*}
\E_{a_{j}}{1_{\mathcal{A}^S_{j}}(a_1,\dots,a_{j})} \geq (1-\delta)=\nu(S)_j
\end{equation*}
and so \ref{ax.3} is satisfied.
\item ($i>j$) Put
\begin{equation*}
\mathcal{A}^S_i:=\mathcal{A}^S_j\times A^{k-j}
\end{equation*}
so that \ref{ax.2} is satisfied.  It remains to note that
\begin{equation*}
\E_{a_{i}}{1_{\mathcal{A}^S_{i}}(a_1,\dots,a_{i})} = \E_{a_{i}}{1_{\mathcal{A}^S_{i-1}\times A}(a_1,\dots,a_{i})} = 1_{\mathcal{A}^S_{i-1}}(a_1,\dots,a_{i-1}),
\end{equation*}
and \ref{ax.3} is satisfied.
\end{enumerate}
Finally, note that $\mathcal{A}^S_0=\{()\}$ and
\begin{eqnarray*}
\mathcal{A}^S & = & \{a \in A^k: x+\sum_{s \in S}{a_s} \in \overbrace{X+\dots +X}^{|S| \text{ times}}+A\} \\ &= & \{a \in A^j: x+\sum_{s \in S}{a_s} \in \overbrace{X+\dots +X}^{|S| \text{ times}}+A\}\times A^{k-j}\\ & \supset & \{(a_1,\dots,a_{j-1},a) \in \mathcal{A}^S_{j-1} \times A: x+\sum_{s \in S}{a_s} \in \overbrace{X+\dots +X}^{|S| \text{ times}}+A\}\times A^{k-j} \\ & = & \mathcal{A}^S_j \times A^{k-j} = \mathcal{A}^S_k.
\end{eqnarray*}
\ref{ax.1} is proved and we have the result by induction on the largest element of $S$.
\end{proof}
The final result of this section packages up the previous lemma in a corollary that captures the aspects of (\ref{eqn.apxcov}) that we should like.  It may be worth saying that it is useful for similar reason to \cite[Proposition C.2]{tao::9}, although the link is a little obscure.

Before we state the corollary we record one final piece of notation, the precise reason for which will become clear in \S\ref{sec.chang}.  Given a finite subset $S$ of $G$ we write $\mu_S$ for the uniform probability measure supported on $S$, and for each $a \in G^r$ we write
\begin{equation*}
\mu_a:=2^{-r}\Asterisk_{i=1}^r{(\mu_{\{0_G\}} + \mu_{\{a_i\}})}.
\end{equation*}
This behaves like an approximation to the uniform measure on the group generated by $a_1,\dots,a_r$.  Indeed, if $G$ has exponent $2$ then $\mu_a=\mu_{\langle a_1,\dots,a_r\rangle}$.
\begin{corollary}\label{cor.tes}
Suppose that $\delta,\eta \in [0,1/2)$, that $A$ is $(1-\delta)$-covered by $X \ni 0_G$, $k \in \N$, and that $\mathcal{A} \subset A^k$ contains a $(1-\eta)$-large generalised sub-product.  Then
\begin{equation*}
\sum_{a \in \mathcal{A}}{\|1_{A} \ast \mu_a\|_{\ell_2(G)}^2} \geq \frac{(1-\eta)^{2k}(1-\delta)^{2k}}{|kX|}|A|^{k+1}
\end{equation*}
\end{corollary}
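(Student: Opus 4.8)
The plan is to bound each $\|1_A \ast \mu_a\|_{\ell_2(G)}^2$ from below by Cauchy--Schwarz against $1_{kX+A}$ (a set of size at most $|kX||A|$, since $kX+A$ is a union of $|kX|$ translates of $A$), and then to aggregate over $a \in \mathcal{A}$ with a single application of Cauchy--Schwarz on the index set $\mathcal{A}\times(kX+A)$. Writing $\Sigma:=\sum_{a \in \mathcal{A}}\langle 1_A \ast \mu_a, 1_{kX+A}\rangle_{\ell_2(G)}$ and noting that $1_A \ast \mu_a$ is non-negative, one gets
\begin{equation*}
\Sigma^2 = \Bigl(\sum_{a \in \mathcal{A}}\sum_{x \in kX+A}{1_A\ast\mu_a(x)}\Bigr)^2 \leq |\mathcal{A}||kX+A|\sum_{a \in \mathcal{A}}{\|1_A\ast\mu_a\|_{\ell_2(G)}^2} \leq |kX||A|^{k+1}\sum_{a \in \mathcal{A}}{\|1_A\ast\mu_a\|_{\ell_2(G)}^2},
\end{equation*}
using $\mathcal{A}\subset A^k$ and $|kX+A|\leq |kX||A|$. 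It therefore suffices to show $\Sigma \geq (1-\eta)^k(1-\delta)^k|A|^{k+1}$.

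To estimate $\Sigma$, unwind the definition of $\mu_a$ for $a=(a_1,\dots,a_k)$: it is the average over the $2^k$ subsets $T\subset[k]$ of the point masses at $\sum_{i \in T}{a_i}$, so $1_A\ast\mu_a(x) = 2^{-k}\sum_{T\subset[k]}{1_A(x-\sum_{i\in T}{a_i})}$. Pairing with $1_{kX+A}$, swapping the order of summation, substituting $x=y+\sum_{i\in T}{a_i}$ with $y\in A$, and finally summing over $a \in \mathcal{A}$ gives
\begin{equation*}
\Sigma = 2^{-k}\sum_{T\subset[k]}\sum_{y\in A}\bigl|\{a\in\mathcal{A}: y+\textstyle\sum_{i\in T}{a_i}\in kX+A\}\bigr|.
\end{equation*}

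Now fix $T\subset[k]$ and $y\in A$. Since $0_G\in X$, the $|T|$-fold sumset of $X$ is contained in $kX$, so the set counted above contains $\mathcal{A}\cap\mathcal{A}^T$, where $\mathcal{A}^T$ is the set produced by Lemma~\ref{lem.heredwant} applied with this $y$ in the role of $x$ and with $S:=T$; that lemma says $\mathcal{A}^T$ contains a $(1-\eta')$-large generalised sub-product of $A^k$ with $\eta':=\delta1_T$ (the vector with entries $\delta$ on $T$ and $0$ elsewhere). As $\eta+\delta<1$, we may intersect this with the given $(1-\eta)$-large generalised sub-product inside $\mathcal{A}$ using Lemma~\ref{lem.heredint}, concluding that $\mathcal{A}\cap\mathcal{A}^T$ contains a generalised sub-product whose $i$-th largeness parameter is $1-\eta-\delta1_T(i)$. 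Lemma~\ref{lem.basicprop} then yields
\begin{equation*}
\bigl|\{a\in\mathcal{A}: y+\textstyle\sum_{i\in T}{a_i}\in kX+A\}\bigr| \geq \prod_{i=1}^k{(1-\eta-\delta1_T(i))}\,|A|^k = (1-\eta)^{k-|T|}(1-\eta-\delta)^{|T|}|A|^k.
\end{equation*}

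Summing over $y\in A$ contributes a factor $|A|$, and the sum over $T\subset[k]$ is a binomial sum, $\sum_{T\subset[k]}{(1-\eta)^{k-|T|}(1-\eta-\delta)^{|T|}} = ((1-\eta)+(1-\eta-\delta))^k = (2-2\eta-\delta)^k$, so $\Sigma \geq 2^{-k}(2-2\eta-\delta)^k|A|^{k+1} = (1-\eta-\delta/2)^k|A|^{k+1}$. Feeding this into the Cauchy--Schwarz bound from the first paragraph gives
\begin{equation*}
\sum_{a\in\mathcal{A}}{\|1_A\ast\mu_a\|_{\ell_2(G)}^2} \geq \frac{(1-\eta-\delta/2)^{2k}}{|kX|}|A|^{k+1},
\end{equation*}
and since $\eta<1/2$ we have $\eta\delta\leq\delta/2$, hence $1-\eta-\delta/2 \geq 1-\eta-\delta+\eta\delta = (1-\eta)(1-\delta)\geq 0$; raising to the power $2k$ gives the claimed inequality. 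The only real obstacle is the bookkeeping in the third paragraph: one has to see that, subset by subset, pairing $1_A\ast\mu_a$ with $1_{kX+A}$ is precisely an instance of the configuration handled by Lemma~\ref{lem.heredwant}, and that the additivity of largeness parameters in Lemma~\ref{lem.heredint} is exactly what makes the ensuing sum over $T$ collapse cleanly through the binomial theorem.
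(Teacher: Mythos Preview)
Your proof is correct and follows essentially the same route as the paper's: invoke Lemma~\ref{lem.heredwant} for each subset $T\subset[k]$, intersect with $\mathcal{A}$ via Lemma~\ref{lem.heredint}, bound the size by Lemma~\ref{lem.basicprop}, collapse the sum over $T$ via the binomial theorem, and finish with Cauchy--Schwarz against $1_{kX+A}$. The only cosmetic differences are that the paper weakens $\prod_i(1-\eta-\delta 1_T(i))$ to $(1-\eta)^k(1-2\delta)^{|T|}$ \emph{before} summing over $T$ (landing directly on $(1-\eta)^k(1-\delta)^k$), whereas you sum exactly to $(1-\eta-\delta/2)^k$ and then compare; your intermediate bound is in fact slightly sharper.
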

\begin{proof}
We apply Lemma \ref{lem.heredwant} to get that for any $x \in A$ and $S \subset [k]$ the set $\mathcal{A}^S$ contains a $(1-\delta)$-large generalised product.  By Lemma \ref{lem.heredint} it follows that $\mathcal{A}^S \cap \mathcal{A}$ contains an $\nu$-large generalised product where $\nu(S)_i=1-\eta-\delta1_S(i)$. Lemma \ref{lem.basicprop} then tells us that
\begin{eqnarray*}
\sum_{a \in \mathcal{A}}{1_{kX+A}(x+\sum_{s \in S}{a_S})} & = & |\{a \in \mathcal{A}: x+\sum_{s \in S}{a_S} \in kX+A\}|\\ & = & |\mathcal{A}^S\cap \mathcal{A}| \\ & \geq & \prod_{i=1}^k{(1-\eta-\delta1_{S}(i))}|A|^k\\ & \geq & (1-\eta)^k(1-2\delta)^{|S|}|A|^k,
\end{eqnarray*}
since $\eta,\delta<1/2$.  Note that
\begin{equation*}
1_{kX+A}(x+\sum_{s \in S}{a_s}) = \langle \mu_{\{x\}} \ast \Asterisk_{s \in S}{\mu_{\{a_s\}}}, 1_{kx+A}\rangle_{\ell_2(G)}
\end{equation*}
Averaging over $S\subset [k]$ we have that
\begin{equation*}
\frac{1}{2^k}\sum_{S \subset [k]}{1_{kX+A}(x+\sum_{s \in S}{a_s})} = \langle \mu_{\{x\}} \ast \mu_a, 1_{kx+A}\rangle_{\ell_2(G)},
\end{equation*}
and hence
\begin{eqnarray*}
\sum_{a \in \mathcal{A}}{\langle \mu_{\{x\}} \ast \mu_a, 1_{kx+A}\rangle_{\ell_2(G)}} & \geq & \frac{1}{2^k}\sum_{S \subset [k]}{ (1-\eta)^k(1-2\delta)^{|S|}|A|^k}\\ & = & \frac{1}{2^k}(1-\eta)^k|A|^k\sum_{S \subset [k]}{(1-2\delta)^{|S|}}\\ &= & \frac{1}{2^k}(1-\eta)^k|A|^k(1+(1-2\delta))^k = (1-\eta)^k(1-\delta)^k|A|^k.
\end{eqnarray*}
Summing over $x \in A$ then gives
\begin{equation*}
\sum_{a \in \mathcal{A}}{\langle 1_A\ast \mu_a,1_{kX+A}\rangle_{\ell_2(G)}}\geq (1-\eta)^k(1-\delta)^k|A|^{k+1}.
\end{equation*}
Finally, apply the Cauchy-Schwarz inequality to the inner product, and then to the outer sum to get that
\begin{eqnarray*}
(1-\eta)^k(1-\delta)^k|A|^{k+1} & \leq &\sum_{a \in \mathcal{A}}{\langle 1_A\ast \mu_a,1_{kX+A}\rangle_{\ell_2(G)}}\\ & \leq & \sum_{a \in \mathcal{A}}{\|1_A\ast \mu_a\|_{\ell_2(G)}\|1_{kX+A}\|_{\ell_2(G)}}\\ & = & |kX+A|^{1/2}\sum_{a \in \mathcal{A}}{\|1_A\ast \mu_a\|_{\ell_2(G)}}\\ & \leq & |kX+A|^{1/2}|\mathcal{A}|^{1/2}\left(\sum_{a \in \mathcal{A}}{\|1_A \ast \mu_a\|_{\ell_2(G)}^2}\right)^{1/2}\\ & \leq & |kX|^{1/2}|A|^{(k+1)/2}\left(\sum_{a \in \mathcal{A}}{\|1_A \ast \mu_a\|_{\ell_2(G)}^2}\right)^{1/2}.
\end{eqnarray*}
The result follows on rearrangement.
\end{proof}

\section{Chang's lemma}\label{sec.chang}

In this section we capture an idea of Chang from \cite[\S2]{cha::0} (recorded as Chang's covering lemma in \cite[Lemma 5.31]{taovu::}), although the connection may not be immediately obvious.  The lemma will be used in the case $h=1_A$ and combined with Corollary \ref{cor.tes} from the previous section.
\begin{lemma}\label{lem.chang}
Suppose that $h \in \ell_2(G)$ and, $\kappa\in (0,1]$, $\eta \in [0,1)$ and $k \in \N$ are parameters.  Then either
\begin{enumerate}
\item there is some $0 \leq l <k$ and some $a \in A^l$ such that the  set of $x \in A$ having
\begin{equation*}
\|h \ast \mu_{a}- \tau_x(h \ast \mu_{a})\|_{\ell_2(G)}^2 < \kappa \|h \ast \mu_{a}\|_{\ell_2(G)}^2
\end{equation*}
has size at least $\eta |A|$;
\item or there is a $(1-\eta)$-large generalised sub-product of $A^k$, $\mathcal{A}$, such that
\begin{equation*}
\|h\ast \mu_{a}\|_{\ell_2(G)}^2 \leq (1-\kappa/4)^k\|h\|_{\ell_2(G)}^2
\end{equation*}
for all $a \in \mathcal{A}$.
\end{enumerate}
\end{lemma}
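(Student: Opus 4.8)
The plan is to run a greedy, levelwise construction in which $\|h\ast\mu_a\|_{\ell_2(G)}^2$ serves as a potential that decreases multiplicatively at each step. The key point is that appending a coordinate to a tuple amounts to a single averaging step: directly from the definition of $\mu_{\bullet}$,
\begin{equation*}
\mu_{(a_1,\dots,a_{i+1})}=\mu_{(a_1,\dots,a_i)}\ast\tfrac12\bigl(\mu_{\{0_G\}}+\mu_{\{a_{i+1}\}}\bigr),
\end{equation*}
and since $f\ast\mu_{\{b\}}=\tau_{-b}(f)$, writing $g:=h\ast\mu_{(a_1,\dots,a_i)}$ gives
\begin{equation*}
h\ast\mu_{(a_1,\dots,a_{i+1})}=\tfrac12\bigl(g+\tau_{-a_{i+1}}(g)\bigr).
\end{equation*}
As $\tau_{-a_{i+1}}$ is an $\ell_2(G)$-isometry, the parallelogram law applied to $g$ and $\tau_{-a_{i+1}}(g)$ yields the exact identity
\begin{equation*}
\bigl\|h\ast\mu_{(a_1,\dots,a_{i+1})}\bigr\|_{\ell_2(G)}^2=\|g\|_{\ell_2(G)}^2-\tfrac14\bigl\|g-\tau_{-a_{i+1}}(g)\bigr\|_{\ell_2(G)}^2.
\end{equation*}
I will also use that $\|g-\tau_{-x}(g)\|_{\ell_2(G)}=\|g-\tau_x(g)\|_{\ell_2(G)}$ (apply the isometry $\tau_x$), so the event in alternative (1) does not depend on the sign of $x$.

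Next I would suppose that alternative (1) \emph{fails}: for every $0\le l<k$ and every $a\in A^l$, the set of $x\in A$ with $\|h\ast\mu_a-\tau_x(h\ast\mu_a)\|_{\ell_2(G)}^2<\kappa\|h\ast\mu_a\|_{\ell_2(G)}^2$ has size $<\eta|A|$; equivalently, by the remark just made, the set of $x\in A$ with $\|h\ast\mu_a-\tau_{-x}(h\ast\mu_a)\|_{\ell_2(G)}^2\ge\kappa\|h\ast\mu_a\|_{\ell_2(G)}^2$ has size $>(1-\eta)|A|$ (a statement that also holds vacuously when $h\ast\mu_a=0$). I then set $\mathcal A_0:=\{()\}$ and, for $0\le i<k$,
\begin{equation*}
\mathcal A_{i+1}:=\Bigl\{(a_1,\dots,a_{i+1})\in\mathcal A_i\times A:\bigl\|h\ast\mu_{a}-\tau_{-a_{i+1}}(h\ast\mu_{a})\bigr\|_{\ell_2(G)}^2\ge\kappa\bigl\|h\ast\mu_{a}\bigr\|_{\ell_2(G)}^2\Bigr\},
\end{equation*}
where $a=(a_1,\dots,a_i)$, and take $\mathcal A:=\mathcal A_k$.

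It then remains to check that $(\mathcal A_i)_{i=0}^k$ exhibits $\mathcal A$ as containing a $(1-\eta)$-large generalised sub-product of $A^k$, and to verify the norm bound. Axioms \ref{ax.1} and \ref{ax.2} are immediate. For \ref{ax.3}: if $(a_1,\dots,a_i)\in\mathcal A_i$ then the fibre $\{a_{i+1}\in A:(a_1,\dots,a_{i+1})\in\mathcal A_{i+1}\}$ has size $>(1-\eta)|A|$ (by the failure of alternative (1) if $h\ast\mu_{(a_1,\dots,a_i)}\ne0$, and it is all of $A$ otherwise), so $\E_{a_{i+1}}1_{\mathcal A_{i+1}}(a_1,\dots,a_{i+1})\ge(1-\eta)1_{\mathcal A_i}(a_1,\dots,a_i)$, the right-hand side being $0$ off $\mathcal A_i$; this is \ref{ax.3} with $\nu_{i+1}=1-\eta$. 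An induction on $i$ using the identity of the first paragraph then gives $\|h\ast\mu_a\|_{\ell_2(G)}^2\le(1-\kappa/4)^i\|h\|_{\ell_2(G)}^2$ for every $a\in\mathcal A_i$: the case $i=0$ is an equality, and if $(a_1,\dots,a_{i+1})\in\mathcal A_{i+1}$ with $a=(a_1,\dots,a_i)$ then
\begin{equation*}
\bigl\|h\ast\mu_{(a_1,\dots,a_{i+1})}\bigr\|_{\ell_2(G)}^2=\|h\ast\mu_a\|_{\ell_2(G)}^2-\tfrac14\bigl\|h\ast\mu_a-\tau_{-a_{i+1}}(h\ast\mu_a)\bigr\|_{\ell_2(G)}^2\le(1-\kappa/4)\|h\ast\mu_a\|_{\ell_2(G)}^2,
\end{equation*}
which is at most $(1-\kappa/4)^{i+1}\|h\|_{\ell_2(G)}^2$. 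Taking $i=k$ yields alternative (2).

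The entire content is the identity in the first paragraph, which turns $a\mapsto\|h\ast\mu_a\|_{\ell_2(G)}^2$ into a potential that drops by a factor $1-\kappa/4$ at every step \emph{unless} $h\ast\mu_a$ is almost invariant under translation by a positive proportion of $A$ — exactly the configuration described in alternative (1). I do not expect a genuine obstacle here; the only points needing a little care are the sign bookkeeping between $\tau_x$ and $\tau_{-x}$ and the degenerate case $h\ast\mu_a=0$, both harmless.
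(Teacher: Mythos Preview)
Your proof is correct and follows essentially the same approach as the paper: build the sets $\mathcal{A}_i$ level by level, using the parallelogram identity to show that failure of alternative (1) forces a multiplicative drop $\|h\ast\mu_{(a_1,\dots,a_{i+1})}\|_{\ell_2}^2\le(1-\kappa/4)\|h\ast\mu_{(a_1,\dots,a_i)}\|_{\ell_2}^2$ on a $(1-\eta)$-fraction of fibres. The only cosmetic differences are that the paper defines $\mathcal{A}_i$ directly via the norm-drop inequality rather than via the translation-difference inequality (these are equivalent by the identity you state), and that you are slightly more careful than the paper about the $\tau_x$ versus $\tau_{-x}$ sign and the degenerate case $h\ast\mu_a=0$.
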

\begin{proof}
Put $\mathcal{A}_0:=\{()\}$ and
\begin{equation*}
\mathcal{A}_i:=\{(a_1,\dots,a_{i}) \in \mathcal{A}_{i-1} \times A: \|h \ast \mu_{a_1,\dots,a_i}\|_{\ell_2(G)}^2 \leq (1-\kappa/4)\|h\ast \mu_{a_1,\dots,a_{i-1}}\|_{\ell_2(G)}^2\}
\end{equation*}
for $1 \leq i \leq k$ so that \ref{ax.2} holds.  Now, suppose $1 \leq i \leq k$ and $a \in \mathcal{A}_{i-1}$.  If the set of $x \in A$ such that
\begin{equation*}
\|h \ast \mu_{a} - \tau_x(h \ast \mu_{a})\|_{\ell_2(G)}^2 < \kappa \|h \ast \mu_{a}\|_{\ell_2(G)}^2
\end{equation*}
has size at least $\eta |A|$ then we terminate in the first case of the lemma with $l=i-1$.  Thus we may assume that there are at least $(1-\eta)|A|$ elements $x \in A$ such that
\begin{equation*}
\|h \ast \mu_{a} - \tau_x(h \ast \mu_{a})\|_{\ell_2(G)}^2 \geq  \kappa \|h \ast \mu_{a}\|_{\ell_2(G)}^2.
\end{equation*}
But then
\begin{eqnarray*}
4\|h \ast \mu_{(a_1,\dots,a_{i-1},x)}\|_{\ell_2(G)}^2 & =& \|h \ast \mu_{a} +\tau_x(h \ast \mu_{a})\|_{\ell_2(G)}^2\\ & = & 4\|h \ast \mu_{a}\|_{\ell_2(G)}^2 - \|h \ast \mu_{a}- \tau_x(h \ast \mu_{a})\|_{\ell_2(G)}^2\\ & \leq & (4-\kappa)\|h \ast \mu_{a}\|_{\ell_2(G)}^2,
\end{eqnarray*}
and so $(a_1,\dots,a_{i-1},x) \in \mathcal{A}_i$.  It follows that
\begin{equation*}
\E_{a_{i}}{1_{\mathcal{A}_{i}}(a_1,\dots,a_{i})} \geq (1-\eta)=(1-\eta)1_{\mathcal{A}_{i-1}}(a_1,\dots,a_{i-1})
\end{equation*}
and we have \ref{ax.3}.  Setting $\mathcal{A}=\mathcal{A}_k$ we have \ref{ax.1} for $\mathcal{A}$, and so $\mathcal{A}$ is a $(1-\eta)$-large generalised sub-product  Furthermore, if $a \in \mathcal{A}$ then
\begin{equation*}
\|h \ast \mu_{a_1,\dots,a_k}\|_{\ell_2(G)}^2 \leq (1-\kappa/4)\|h \ast \mu_{a_1,\dots,a_{k-1}}\|_{\ell_2(G)}^2 \leq \dots \leq (1-\kappa/4)^k\|h \|_{\ell_2(G)}^2
\end{equation*}
by induction and construction of the sets $\mathcal{A}_i$.  We are then in the second case of the lemma and the result is proved.
\end{proof}
It may be worth saying that it is because this lemma outputs a generalised sub-product that we had to extend Corollary \ref{cor.tes} to cover generalised sub-products; in other words, this lemma is the reason for the presence of generalised sub-products in this note.

\section{Fourier analysis and almost-invariant functions}

Fourier analysis is inextricably linked with Fre{\u\i}man's theorem and while we have not needed it so far, the introduction of convolution earlier was a clear foreshadowing of things to come.  We take a moment to record some basic definitions, but the reader may wish to refer to \cite[\S4]{taovu::} or \cite{rud::1} for a more extensive discussion.

We shall regard $G$ as a discrete group and write $\widehat{G}$ for the compact Abelian group of characters on $G$.  Given $f \in \ell_1(G)$, the \emph{Fourier transform} of $f$ is defined to be the function
\begin{equation*}
\widehat{f}:\widehat{G} \rightarrow \C; \gamma\mapsto \sum_{x \in G}{f(x)\overline{\gamma(x)}}.
\end{equation*}
The group $\wh{G}$ is naturally a compact group endowed with Haar probability measure which we shall denote $d\gamma$.  While it may seem like there is some analysis here, we are only interested in finite subsets of groups with finite exponent and so we can freely take $G$ to be finite and ignore any of this.

Following Green and Ruzsa \cite{greruz::0} we shall analyse the subgroup $\langle A\rangle$ in our problem by considering the annihilator of the large spectrum of $A$.  To make sense of this we need a couple of definitions: given a set of characters $\Gamma$, we define the \textbf{annihilator} of $\Gamma$ to be
\begin{equation*}
\Gamma^{\perp}:=\{x \in G: \gamma(x)=1 \text{ for all } \gamma \in \Gamma\}.
\end{equation*}
Given $f \in \ell_1(G)$ and $\epsilon \in (0,1]$ we define the \textbf{$\epsilon$-large spectrum} of $f$ to be
\begin{equation*}
\Spec_\epsilon(f):=\{\gamma \in \wh{G}: |\wh{f}(\gamma)| \geq \epsilon \|f\|_{\ell_1(G)}\}.
\end{equation*}
The next lemma gives us a way to contain our set in the annihilator of a suitable large spectrum, and it is here that we make essential use of the fact that $G$ has bounded exponent.  The first part of the proof is basically an argument of Green and Konyagin \cite[Lemma 3.6]{grekon::}.
\begin{lemma}\label{lem.annihilate}
Suppose that $G$ is an Abelian group of exponent $r$, $g\in \ell_1(G)$ is not identically $0$, and $\epsilon \in (0,1]$ is a parameter such that
\begin{equation*}
\|g - \tau_a(g)\|_{\ell_1(G)} \leq \epsilon\|g\|_{\ell_1(G)} \text{ for all } a \in A.
\end{equation*}
Then $A \subset \Spec_{r\epsilon}(g)^\perp$.
\end{lemma}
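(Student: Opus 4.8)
The plan is to verify the defining condition of $\Spec_{r\epsilon}(g)^\perp$ directly: I would fix $a \in A$ and $\gamma \in \Spec_{r\epsilon}(g)$ and show that $\gamma(a) = 1$. The starting point is the standard identity $\widehat{\tau_a(g)}(\gamma) = \gamma(a)\widehat{g}(\gamma)$, which is immediate from a change of variables in the definition of the Fourier transform, so that $\widehat{g - \tau_a(g)}(\gamma) = (1 - \gamma(a))\widehat{g}(\gamma)$. Since $|\widehat{h}(\gamma)| \leq \|h\|_{\ell_1(G)}$ for every $h \in \ell_1(G)$, the hypothesis on $g$ then gives
\begin{equation*}
|1 - \gamma(a)|\,|\widehat{g}(\gamma)| = |\widehat{g - \tau_a(g)}(\gamma)| \leq \|g - \tau_a(g)\|_{\ell_1(G)} \leq \epsilon\|g\|_{\ell_1(G)}.
\end{equation*}

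Next I would use membership in the large spectrum: by definition $|\widehat{g}(\gamma)| \geq r\epsilon\|g\|_{\ell_1(G)}$, which is strictly positive since $g$ is not identically $0$, so dividing the displayed inequality through by it yields $|1 - \gamma(a)| \leq 1/r$. This is precisely where bounded exponent enters: as $G$ has exponent $r$ we have $ra = 0_G$, hence $\gamma(a)^r = \gamma(ra) = 1$ and $\gamma(a)$ is an $r$-th root of unity. It then remains to observe that the only $r$-th root of unity within distance $1/r$ of $1$ is $1$ itself: a nontrivial one is $e^{2\pi i k/r}$ with $1 \leq k \leq r-1$, lying at distance $2\sin(\pi k/r) \geq 2\sin(\pi/r) \geq 4/r$ from $1$ by Jordan's inequality, and $4/r > 1/r$. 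Therefore $\gamma(a) = 1$, and since $a \in A$ and $\gamma \in \Spec_{r\epsilon}(g)$ were arbitrary we conclude $A \subset \Spec_{r\epsilon}(g)^\perp$.

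I expect no real obstacle here: the steps up to the division are the Green--Konyagin computation referred to above and are entirely routine, and the closing root-of-unity estimate is elementary. The only mild point to watch is whether the threshold $r\epsilon$ (rather than something like $2r\epsilon$) suffices; the comfortable gap between $4/r$ and $1/r$ in the last step confirms that it does, with room to spare.
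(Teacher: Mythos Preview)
Your proof is correct and follows essentially the same route as the paper's: both compute $|1-\gamma(a)|\,|\wh{g}(\gamma)|=|(g-\tau_a g)^\wedge(\gamma)|\leq \epsilon\|g\|_{\ell_1(G)}$, divide using the spectral lower bound to get $|1-\gamma(a)|\leq 1/r$, and then finish with the elementary fact that nontrivial $r$-th roots of unity lie at distance at least $4/r$ from $1$. The only cosmetic difference is in the final estimate, where the paper bounds $|1-e^{2\pi i/r}|$ via $|\sin(2\pi/r)|$ whereas you use the exact value $2\sin(\pi/r)$ and Jordan's inequality; both give $4/r$.
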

\begin{proof}
Suppose that $\gamma\in \Spec_{r\epsilon}(g)$ and $a \in A$. Then
\begin{eqnarray*}
r\epsilon \|g\|_{\ell_1(G)}|1-\gamma(a)| & \leq & |1-\gamma(a)||\wh{g}(\gamma)| \\ &= & |\wh{g}(\gamma) - 
\gamma(a)\wh{g}(\gamma)|\\ & = & |(g-\tau_a(g))^\wedge(\gamma)|\\ & \leq & \|g - \tau_a(g)\|_{\ell_1(G)} \leq \epsilon \|g\|_{\ell_1(G)},
\end{eqnarray*}
by the Hausdorff-Young inequality.  Dividing by $\epsilon\|g\|_{\ell_1(G)}$ (possible since $g \not \equiv 0$ and $\epsilon>0$) and rearranging we get that $|1-\gamma(a)| \leq 1/r$.  Of course since $G$ is a group of exponent $r$ it follows that $\gamma(a)$ is an $r$th root of unity and hence if it is not equal to $1$ then
\begin{equation*}
|1-\gamma(a)| \geq |1-\exp(2\pi i/r)| \geq |\sin (2\pi/r)| \geq \frac{2}{\pi}\cdot \frac{2\pi}{r} = \frac{4}{r}.
\end{equation*}
It follows that $\gamma(a)=1$ and the result is proved.
\end{proof}
With the above lemma in hand we need a supply of suitable functions $g$.  The hypothesis on $g$ look somewhat like those in the first case of Lemma \ref{lem.chang} so it should not be too surprising that we shall be combining the work of \S\ref{sec.statcov} and \S\ref{sec.chang} to act as such a supply.  The next proposition does just this and is the driving result of the whole note.
\begin{proposition}\label{prop.int}
Suppose that $G$ is an Abelian group of exponent $r$, $A \subset G$ has $|A+A| \leq K|A|$, and $\epsilon \in (0,1]$ is a parameter.  Then there is a subgroup $V$ generated by at most $O(K\epsilon^{-2}\min\{\log r,\log 2\epsilon^{-1}\})$ elements, and a non-negative function $f$ supported on $A+V$ such that
\begin{equation*}
\|f - \tau_x(f)\|_{\ell_1(G)} \leq \epsilon \|f\|_{\ell_1(G)}
\end{equation*}
for at least $\Omega(\epsilon |A|)$ elements $x \in A$.
\end{proposition}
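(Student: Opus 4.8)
The plan is to apply the three results assembled in \S\S2--3 in turn, keeping $1_A$ as the distinguished function throughout. Before starting I would dispose of a trivial range: if $|A|$ is below a fixed constant multiple of $\epsilon^{-1}$, then $\langle A\rangle$ is generated by the (fewer than $O(\epsilon^{-1})$, hence $O(K\epsilon^{-2}\min\{\log r,\log 2\epsilon^{-1}\})$) elements of $A$, and $f:=1_{\langle A\rangle}$ is non-negative, supported on $A+\langle A\rangle$, and exactly translation invariant, so the conclusion holds with $V:=\langle A\rangle$. So I may assume $|A|\gg\epsilon^{-1}$.

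Next I would apply the Statistical covering lemma (Lemma \ref{lem.statcov}) to $B=A$ with a parameter $\delta=\Theta(\epsilon)$, and adjoin $0_G$ to the output to get $X\ni 0_G$ with $|X|=O(\delta^{-1}K)=O(K\epsilon^{-1})$ by which $A$ is $(1-\delta)$-covered. Then I would apply Chang's lemma (Lemma \ref{lem.chang}) with $h=1_A$ and parameters $\kappa=\Theta(\epsilon)$, $\eta=\Theta(\epsilon)$ chosen with small enough constants that $\eta+\delta$ is small compared with $\kappa$, and with $k$ still to be fixed. If the second alternative held we would obtain a $(1-\eta)$-large generalised sub-product $\mathcal{A}\subset A^k$ with $\|1_A\ast\mu_a\|_{\ell_2}^2\le(1-\kappa/4)^k|A|$ for every $a\in\mathcal{A}$, so that $\sum_{a\in\mathcal{A}}\|1_A\ast\mu_a\|_{\ell_2}^2\le|A|^k(1-\kappa/4)^k|A|$; on the other hand Corollary \ref{cor.tes} applied to this $\mathcal{A}$ gives $\sum_{a\in\mathcal{A}}\|1_A\ast\mu_a\|_{\ell_2}^2\ge(1-\eta)^{2k}(1-\delta)^{2k}|kX|^{-1}|A|^{k+1}$, and comparing the two forces
\begin{equation*}
|kX|\ge\left(\frac{(1-\eta)^2(1-\delta)^2}{1-\kappa/4}\right)^k.
\end{equation*}
Since $\eta+\delta$ is small relative to $\kappa$, the base here is $1+\Omega(\kappa)=1+\Omega(\epsilon)$, so the right-hand side is $\exp(\Omega(\epsilon k))$; but $X\subset A$ with $|X|=O(K\epsilon^{-1})$ gives $|kX|\le\binom{|X|+k-1}{k}$, which is $\exp(O(|X|\log(k/|X|)))$, and taking $k=\Theta(K\epsilon^{-2}\log 2\epsilon^{-1})$ with a large enough constant makes $\exp(O(|X|\log(k/|X|)))<\exp(\Omega(\epsilon k))$. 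Hence the second alternative is impossible and the first must hold.

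The first alternative of Lemma \ref{lem.chang} supplies some $0\le l<k$ and $a\in A^l$ such that $\|g-\tau_x g\|_{\ell_2}^2<\kappa\|g\|_{\ell_2}^2$ for at least $\eta|A|=\Omega(\epsilon|A|)$ elements $x\in A$, where $g:=1_A\ast\mu_a$. I would take $V:=\langle a_1,\dots,a_l\rangle$, generated by $l<k=O(K\epsilon^{-2}\log 2\epsilon^{-1})$ elements, and set $f:=1_A\ast\mu_a\ast\mu_{-a}$, where $-a:=(-a_1,\dots,-a_l)$ so that $\mu_{-a}$ is the reflection of $\mu_a$. Then $f$ is non-negative, supported on $A+V$ since $\mu_a\ast\mu_{-a}$ is supported on $V$, and $\|f\|_{\ell_1}=|A|$. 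The extra convolution by $\mu_{-a}$ is there precisely to turn the $\ell_2$ near-invariance of $g$ into $\ell_1$ near-invariance of $f$: from $f-\tau_x f=(g-\tau_x g)\ast\mu_{-a}$ and Young's inequality,
\begin{equation*}
\|f-\tau_x f\|_{\ell_1}\le\|g-\tau_x g\|_{\ell_2}\|\mu_{-a}\|_{\ell_2}<\sqrt{\kappa}\,\|g\|_{\ell_2}\|\mu_a\|_{\ell_2}\le\sqrt{\kappa}\,|A|^{1/2},
\end{equation*}
using $\|g\|_{\ell_2}\le\|1_A\|_{\ell_2}\|\mu_a\|_{\ell_1}=|A|^{1/2}$ and $\|\mu_a\|_{\ell_2}\le 1$. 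Dividing by $\|f\|_{\ell_1}=|A|$ and using $|A|\gg\epsilon^{-1}$ together with $\kappa=\Theta(\epsilon)$, this ratio is at most $\epsilon$ for each of the $\Omega(\epsilon|A|)$ elements $x$ above, which is the conclusion.

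The point I expect to need the most care is the simultaneous calibration of $\kappa,\eta,\delta$ and $k$: the $\ell_2\to\ell_1$ passage wants $\kappa\lesssim\epsilon^2|A|$, the conclusion wants the count $\eta|A|$ to be $\Omega(\epsilon|A|)$, and excluding the second alternative of Lemma \ref{lem.chang} wants $\eta+\delta$ small against $\kappa$; these can be reconciled only because reducing to $|A|\gg\epsilon^{-1}$ permits $\kappa$ as large as $\Theta(\epsilon)$, and because one must use $|kX|\le\binom{|X|+k-1}{k}$ rather than the wasteful $(k+1)^{|X|}$ — the former keeps $k$ at $O(K\epsilon^{-2}\log 2\epsilon^{-1})$ instead of $O(K\epsilon^{-2}\log(K/\epsilon))$. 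Finally, the alternative bound $O(K\epsilon^{-2}\log r)$ on the number of generators would be obtained by running the same argument but also truncating the descent that builds $a$ in Lemma \ref{lem.chang} once the energy $\|1_A\ast\mu_{(a_1,\dots,a_i)}\|_{\ell_2}^2$ — which cannot fall below $|A|^2/|A+\langle a_1,\dots,a_i\rangle|$ — can decrease no further given the bounded exponent, keeping whichever of the two bounds is smaller.
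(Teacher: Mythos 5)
Your overall architecture matches the paper's: statistical covering with $\delta=\Theta(\epsilon)$, Chang's lemma with $h=1_A$, exclusion of the second alternative by playing Corollary \ref{cor.tes} against the energy decrement and the binomial bound on $|kX|$, and then extraction of $V=\langle a_1,\dots,a_l\rangle$ from the first alternative. The genuine gap is in the final step, where you convert the $\ell_2$ almost-invariance of $g:=1_A\ast\mu_a$ into $\ell_1$ almost-invariance of $f:=1_A\ast\mu_a\ast\mu_{-a}$. The inequality you invoke, $\|(g-\tau_xg)\ast\mu_{-a}\|_{\ell_1}\leq\|g-\tau_xg\|_{\ell_2}\|\mu_{-a}\|_{\ell_2}$, is not an instance of Young's inequality: with two $\ell_2$ norms on the right, Young controls the $\ell_\infty$ norm of the convolution, not the $\ell_1$ norm (for an $\ell_1$ bound one needs $\|u\ast v\|_{\ell_1}\leq\|u\|_{\ell_1}\|v\|_{\ell_1}$), and the claimed inequality is false in general. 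Indeed the same pattern applied to $f$ itself would give $\|f\|_{\ell_1}\leq\|g\|_{\ell_2}\|\mu_{-a}\|_{\ell_2}\leq|A|^{1/2}$, contradicting $\|f\|_{\ell_1}=|A|$, which you use in the same paragraph. Nor can the step be repaired within your setup: the honest route $\|(g-\tau_xg)\ast\mu_{-a}\|_{\ell_1}\leq\|g-\tau_xg\|_{\ell_1}\leq|{\rm supp}(g-\tau_xg)|^{1/2}\|g-\tau_xg\|_{\ell_2}$ loses a factor of the square root of the support of $g$, which lives in $A+V$ and is not controlled in terms of $|A|$ (so your reduction to $|A|\gg\epsilon^{-1}$ does not help). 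This is exactly the difficulty the paper's choice $f:=(1_A\ast\mu_a)^2$ is designed to dodge: there $\|f\|_{\ell_1}=\|g\|_{\ell_2}^2$, and $\|f-\tau_xf\|_{\ell_1}\leq\|g-\tau_xg\|_{\ell_2}\|g+\tau_xg\|_{\ell_2}\leq 2\kappa^{1/2}\|f\|_{\ell_1}$ with no support factor at all, so the passage from $\ell_2$ to $\ell_1$ costs only a square root in $\kappa$ rather than an uncontrolled support term.

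A secondary gap: the proposition asserts the bound $O(K\epsilon^{-2}\min\{\log r,\log 2\epsilon^{-1}\})$ on the number of generators, and your main argument only delivers the $\log 2\epsilon^{-1}$ branch. The sketch you give for the $\log r$ branch (truncating the descent once the energy $\|1_A\ast\mu_{(a_1,\dots,a_i)}\|_{\ell_2}^2$ reaches the floor $|A|^2/|A+\langle a_1,\dots,a_i\rangle|\geq|A|r^{-i}$) does not force anything: since $1-\kappa/4\geq 1/r$ for every $r\geq2$, the decrements demanded in the second alternative of Lemma \ref{lem.chang} remain consistent with that floor for all $k$, so no contradiction arises. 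The simple fix, and the one the paper uses, is the other upper bound $|kX|\leq r^{|X|}$ valid in an exponent-$r$ group, which gives $|kX|^{1/k}\leq\exp(O(K\log r/(\delta k)))$ and hence allows $k=O(K\epsilon^{-2}\log r)$; taking whichever choice of $k$ is smaller yields the stated minimum.
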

\begin{proof}
Let $\delta$ be a parameter to be chosen later (it will just be a constant multiple of $\epsilon$).  Apply the statistical covering lemma (Lemma \ref{lem.statcov}) to the set $A$ with parameter $\delta$ to get a set $Y$ of size at most $\delta^{-1}K$ such that $A$ is $(1-\delta)$-covered by $Y$.  Let $X:=Y \cup \{0_G\}$ so that $A$ is $(1-\delta)$-covered by $X$ and $|X|\leq \delta^{-1}K+1$.

Let $k\geq \delta^{-1}K$ be a natural number to be optimised later and apply Lemma \ref{lem.chang} with $h=1_A$, $\kappa=\epsilon/4$ and $\delta$ to get that either there is some $0 \leq l < k$ and some $a \in A^l$ such that the set of $x\in A$ having
\begin{equation*}
\|1_A\ast \mu_{a} - \tau_a(1_A \ast \mu_{a})\|_{\ell_2(G)}^2 \leq \frac{\epsilon}{4}\|1_A\ast \mu_{a} \|_{\ell_2(G)}^2
\end{equation*}
has size at least $\delta |A|$, or else there is a $(1-\delta)$-large generalised sub-product $\mathcal{A}$ of $A^k$ such that
\begin{equation}\label{eqn.upper}
\|1_A\ast \mu_{a}\|_{\ell_2(G)}^2 \leq (1-\epsilon/16)^k|A| \text{ for all } a \in \mathcal{A}.
\end{equation}
Now apply Corollary \ref{cor.tes} with the set $A$ ($(1-\delta)$-covered by $X$) to get that
\begin{equation}\label{eqn.lower}
\sum_{a \in \mathcal{A}}{\|1_{A} \ast \mu_a\|_{\ell_2(G)}^2} \geq |kX|^{-1}(1-\delta)^{4k}|A|^{k+1}.
\end{equation}
Since $G$ is an Abelian group of exponent $r$ we have that $|kX| \leq r^{|X|}$.  On the other hand if $k$ is small compared with $r$ then we have a better upper bound, from the fact that $G$ is commutative, namely
\begin{eqnarray*}
|kX| \leq \binom{k+|X|-1}{|X|-1} & \leq & \exp(O(|X|(1+\log ((k+|X|)/|X|))))\\ & \leq & \exp(O((K/\delta) \log 2(k\delta/K))),
\end{eqnarray*}
where we have used the fact that $k \geq \delta^{-1}K \geq |X|-1$ in the second inequality.  It follows that
\begin{equation*}
|kX|^{1/k} \leq \exp{(O(K/\delta k)\min \{\log r,\log 2(k\delta/K)\})}.
\end{equation*}
Combining (\ref{eqn.upper}) and (\ref{eqn.lower}), dividing by $|A|^{k+1}$, and taking $k$-th roots we conclude that
\begin{equation*}
\exp{(O(K/\delta k)\min \{\log r,\log 2(k\delta/K)\})}(1-\epsilon/16)\geq (1-\delta)^4.
\end{equation*}
We can choose $k = O(K\delta^{-2}\min\{\log r,\log 2\delta^{-1}\})$ such that the first term on the left is at most $1+\delta$, and it follows that we can then take $\delta = \Omega(\epsilon)$ to get a contradiction.  This contradiction means that we must have been in the first case of Lemma \ref{lem.chang} at some point \emph{i.e.} there is some
\begin{equation*}
l<k = O(K\delta^{-2}\min\{\log r,\log 2\delta^{-1}\})=O(K\epsilon^{-2}\min\{\log r,\log 2\epsilon^{-1}\})
\end{equation*}
and some $a \in A^l$ such that
\begin{equation*}
\|1_A\ast \mu_{a} - \tau_x(1_A \ast \mu_{a})\|_{\ell_2(G)}^2 \leq \frac{\epsilon}{4}\|1_A\ast \mu_{a'} \|_{\ell_2(G)}^2
\end{equation*}
for $\Omega(\epsilon |A|)$ elements $x \in A$.  We put $V:=\langle a_1,\dots,a_l\rangle$, and see that $V$ is generated by the claimed number of elements, and $f:=(1_A \ast \mu_{a})^2$ so that $f$ is supported on $A+V$.  It remains to note that by the triangle inequality and the fact that $\tau_x$ is an isometry we have
\begin{eqnarray*}
\|f - \tau_x(f)\|_{\ell_1(G)} & \leq&  |\langle 1_A \ast \mu_{a}, 1_A \ast \mu_{a}-\tau_x(1_A \ast \mu_{a})\rangle|\\ & & + |\langle \tau_x(1_A \ast \mu_{a}), 1_A \ast \mu_{a}-\tau_x(1_A \ast \mu_{a})\rangle| \\ & = & 2\|1_A \ast \mu_{a}-\tau_x(1_A \ast \mu_{a})\|_{\ell_2(G)}^2\\ & & + 2\|1_A \ast \mu_{a}-\tau_{-x}(1_A \ast \mu_{a})\|_{\ell_2(G)}^2\\ & \leq & 4\cdot \frac{\epsilon}{4} \|1_A \ast \mu_{a}\|_{\ell_2(G)}^2 = \epsilon \|f\|_{\ell_1(G)}.
\end{eqnarray*}
The result is proved.
\end{proof}
Although the proposition makes use of the fact that $G$ has bounded exponent this is not really essential and it can be recast as a useful statement in more general settings too.

Finally, while the proposition does provide functions satisfying the hypothesis of Lemma \ref{lem.annihilate}, they are are only useful if we can also show that the annihilator of the large spectrum of these functions is small; the next lemma does this.  Its basis is an idea introduced to Fre{\u\i}man-type problems by Green and Ruzsa in \cite{greruz::0} in a way closely related to work of Schoen \cite{sch::0}.
\begin{lemma}\label{lem.big}
Suppose that $G$ is an Abelian group, $\emptyset \neq A \subset G$ has $|A+A| \leq K|A|$, $\epsilon \in (0,1/2]$ is a parameter, and $0 \not\equiv h\in \ell_1(G)$ is a non-negative function supported on $A$ such that
\begin{equation*}
\|h - \tau_a(h)\|_{\ell_1(G)} \leq \epsilon\|h\|_{\ell_1(G)}
\end{equation*}
for all $a \in A'$.  Then for any non-negative $0 \not\equiv g \in \ell_1(G)$ supported on $A'$ we have
\begin{equation*}
|\Spec_{1/4K^{2\epsilon}}(g)^\perp| \leq 4K|A|.
\end{equation*}
\end{lemma}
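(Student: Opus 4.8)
The strategy is to show directly that the large spectrum of $g$ is \emph{big}: writing $\rho:=\tfrac14 K^{-2\epsilon}$ and $H:=\Spec_\rho(g)^\perp$, it suffices to prove $|\Spec_\rho(g)|\ge |G|/(4K|A|)$, because the annihilator satisfies $|G|/|H|=|\langle\Spec_\rho(g)\rangle|\ge|\Spec_\rho(g)|$. Rescaling, we may assume $\|g\|_{\ell_1(G)}=1$, so $g$ is a probability measure supported on $A'$.

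First I would transfer the almost-invariance of $h$ onto convolution powers of $g$. Averaging $\|h-\tau_a h\|_{\ell_1(G)}\le\epsilon\|h\|_{\ell_1(G)}$ over $a$ with weights $g$ gives $\|h-h\ast g\|_{\ell_1(G)}\le\epsilon\|h\|_{\ell_1(G)}$, and telescoping upgrades this to $\|h-h\ast g^{(k)}\|_{\ell_1(G)}\le k\epsilon\|h\|_{\ell_1(G)}$ for all $k\in\N$. Since $h$ is supported on $A$ and $h\ast g^{(k)}\ge 0$ has the same $\ell_1$-mass as $h$, this forces $\langle 1_A,h\ast g^{(k)}\rangle\ge(1-k\epsilon)\|h\|_{\ell_1(G)}$, i.e. almost all of the mass of $h\ast g^{(k)}$ still sits on the set $A$ of size $|A|$. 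By Cauchy--Schwarz a non-negative function carrying mass $M$ on a set of size $|A|$ has $\ell_2$-norm at least $M|A|^{-1/2}$, so for every $k$ with $k\epsilon<1$,
\[
\|h\ast g^{(k)}\|_{\ell_2(G)}^2\ \ge\ \frac{(1-k\epsilon)^2}{|A|}\,\|h\|_{\ell_1(G)}^2 .
\]

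Next I would expand the left side by Parseval as $|G|^{-1}\sum_\gamma|\widehat h(\gamma)|^2|\widehat g(\gamma)|^{2k}$ and split the sum over $\Spec_\rho(g)$ and its complement. On $\Spec_\rho(g)$ bound $|\widehat h(\gamma)|\le\|h\|_{\ell_1(G)}$ and $|\widehat g(\gamma)|\le 1$, for a contribution of at most $\|h\|_{\ell_1(G)}^2|\Spec_\rho(g)|$; off $\Spec_\rho(g)$ we have $|\widehat g(\gamma)|<\rho$, so using $\sum_\gamma|\widehat h(\gamma)|^2=|G|\|h\|_{\ell_2(G)}^2\le|G|\|h\|_{\ell_1(G)}^2$ the contribution is at most $\rho^{2k}|G|\|h\|_{\ell_1(G)}^2$. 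Dividing by $\|h\|_{\ell_1(G)}^2$ and combining with the previous display yields
\[
|\Spec_\rho(g)|\ \ge\ |G|\!\left(\frac{(1-k\epsilon)^2}{|A|}-\rho^{2k}\right).
\]

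The remaining — and, I expect, the genuinely delicate — step is the choice of $k$: one needs the Fourier tail $\rho^{2k}$ to drop below $(4K|A|)^{-1}$ while keeping $k\epsilon$ bounded away from $1$ so that the concentration estimate of the second paragraph survives. Taking $k$ of size $\asymp\epsilon^{-1}$ (for definiteness the largest integer with $k\epsilon\le\tfrac12$, so $(1-k\epsilon)^2\ge\tfrac14$), the quantity $\rho^{2k}=4^{-2k}K^{-4k\epsilon}$ is driven down by the factor $4^{-2k}$, which is exponentially small in $\epsilon^{-1}$, together with the factor $K^{-4k\epsilon}=K^{-\Theta(1)}$; this is exactly why the threshold in the statement is polynomially small in $K$ with the exponent scaled by $\epsilon$, and balancing these two competing effects is what pins down $\rho=\tfrac14K^{-2\epsilon}$. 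With such a $k$ one obtains $\rho^{2k}\le (8|A|)^{-1}$ and hence $|\Spec_\rho(g)|\ge |G|/(8|A|)\ge|G|/(4K|A|)$, completing the proof. (A short separate treatment disposes of the degenerate range $K$ close to $1$, of the bookkeeping of absolute constants, and — if one insists on the full range $\epsilon\le\tfrac12$ rather than the small-$\epsilon$ regime used in the application — of the case where $A'$ is so small that $g$ is automatically concentrated and $\Spec_\rho(g)$ visibly has size $\gg|G|/|A|$ already from Parseval.)
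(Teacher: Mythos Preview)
Your argument has a genuine gap at the sentence ``With such a $k$ one obtains $\rho^{2k}\le(8|A|)^{-1}$.'' With $k$ the largest integer satisfying $k\epsilon\le\tfrac12$ you have $\rho^{2k}=4^{-2k}K^{-4k\epsilon}$, a quantity that depends only on $K$ and $\epsilon$; but $|A|$ is not bounded in terms of $K$ and $\epsilon$, so the inequality simply fails once $|A|$ is large. (Take $\epsilon=\tfrac12$, $K=2$, $|A|=10^{6}$: then $k=1$, $\rho=\tfrac18$, $\rho^{2k}=\tfrac{1}{64}$, whereas $(8|A|)^{-1}=\tfrac{1}{8\cdot10^{6}}$.) A telltale sign is that the hypothesis $|A+A|\le K|A|$ is never used anywhere in your argument except for the trivial $K\ge1$ in the last line; as written, your inequality $|\Spec_\rho(g)|\ge|G|\bigl((1-k\epsilon)^2/|A|-\rho^{2k}\bigr)$ is correct but its right-hand side is negative for large $|A|$.

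The paper fixes exactly this defect by testing $h\ast g^{(k)}$ not against $1_A$ (to extract an $\ell_2$ lower bound) but against $1_{A+A}\ast 1_{-A}$, which equals $|A|$ on the support of $h$. On the Fourier side this inserts the factor $|\wh{1_{A+A}}(\gamma)||\wh{1_A}(\gamma)|$, whose integral is at most $\sqrt{|A+A||A|}\le\sqrt{K}\,|A|$ by Cauchy--Schwarz and Parseval; \emph{this} is where the small doubling enters. Now both the main term and the off-spectrum tail scale like $|A|$, the comparison goes through, and one obtains $\int_{\Spec_\rho(g)}|\wh{1_{A+A}}||\wh{1_A}|\,d\gamma\ge\tfrac14|A|$. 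The endgame is also different from yours: rather than lower-bounding $|\Spec_\rho(g)|$ (which would again need $G$ finite), one takes any finite subgroup $V\le\Spec_\rho(g)^\perp$, uses that $\wh{1_V}=|V|$ on $\Spec_\rho(g)$, and bounds $\int|\wh{1_{A+A}}||\wh{1_A}||\wh{1_V}|^2\,d\gamma\le|A+A|\,|A|\,|V|$ to get $|V|\le4K|A|$.
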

\begin{proof}
Let $k:=\lfloor \epsilon^{-1}/2\rfloor$ and note that by the triangle inequality and the fact that $\tau_x$ is an isometry we have
\begin{equation*}
\|h - \tau_{-x}(h)\|_{\ell_1(G)}=\|h - \tau_x(h)\|_{\ell_1(G)} \leq \frac{1}{2}\|h\|_{\ell_1(G)} \text{ for all } x \in kA'.
\end{equation*}
Since $h$ is non-negative and the support of $h$ is contained in $A$ we have that
\begin{eqnarray*}
\|h\|_{\ell_1(G)}|A| &=& \langle h, 1_{A+A}\ast 1_{-A}\rangle_{\ell_2(G)}\\ & = & \langle \tau_{-x}(h),1_{A+A}\ast 1_{-A}\rangle_{\ell_2(G)} + \langle (h-\tau_{x-}(h)),1_{A+A}\ast 1_{-A}\rangle_{\ell_2(G)}\\ & \leq &  \langle \tau_{-x}(h),1_{A+A}\ast 1_{-A}\rangle_{\ell_2(G)} + \frac{1}{2}\|h\|_{\ell_1(G)}|A|
\end{eqnarray*}
for any $x \in kA'$.  Summing against $\overbrace{g \ast \dots \ast g}^{k \text{ times}}(x)$ (which has support on $kA'$ and is non-negative), we get that
\begin{equation}\label{eqn.base}
\langle h \ast \overbrace{g \ast \dots \ast g}^{k \text{ times}},1_{A+A}\ast 1_{-A}\rangle_{\ell_2(G)} \geq  \frac{1}{2}\|h\|_{\ell_1(G)}|A|\|g\|_{\ell_1(G)}^k.
\end{equation}
We can then apply Plancherel's theorem to see that
\begin{equation*}
\int{\wh{h}(\gamma)\wh{g}(\gamma)^k\overline{\wh{1_{A+A}}(\gamma)}\wh{1_A}(\gamma)d\gamma} \geq \frac{1}{2}\|h\|_{\ell_1(G)}|A|\|g\|_{\ell_1(G)}^k.
\end{equation*}
Write $S:=\Spec_{1/4K^{2\epsilon}}(g)$ and, with explanation of the passage between the lines in the following paragraph, we then have
\begin{eqnarray}
\nonumber \int_{\wh{G}\setminus S}{|\wh{h}(\gamma)||\wh{g}(\gamma)|^k|\wh{1_{A+A}}(\gamma)||\wh{1_A}(\gamma)|d\gamma} & \leq &\left(\frac{\|g\|_{\ell_1(G)}}{4K^{2\epsilon}}\right)^k \int{|\wh{h}(\gamma)||\wh{1_{A+A}}(\gamma)||\wh{1_A}(\gamma)|d\gamma}\\ \nonumber & \leq &  \frac{\|g\|_{\ell_1(G)}^k}{4^kK^{2\epsilon k}}\|h\|_{\ell_1(G)}\int{|\wh{1_{A+A}}(\gamma)||\wh{1_A}(\gamma)|d\gamma}\\ \nonumber & \leq & \frac{\|g\|_{\ell_1(G)}^k}{4^kK^{2\epsilon k}}\|h\|_{\ell_1(G)}\\ \nonumber & & \times\left(\int{|\wh{1_{A+A}}(\gamma)|^2d\gamma}\right)^{1/2}\left(\int{|\wh{1_A}(\gamma)|^2d\gamma}\right)^{1/2}\\ \nonumber & = & \frac{\|g\|_{\ell_1(G)}^k}{4^kK^{2\epsilon k}}\|h\|_{\ell_1(G)}\sqrt{|A+A||A|}\\ \nonumber  & \leq & \frac{1}{4}\|g\|_{\ell_1(G)}^k\|h\|_{\ell_1(G)}|A|K^{1/2-2k\epsilon}\\ \label{eqn.ots} & \leq & \frac{1}{4}\|h\|_{\ell_1(G)}|A|\|g\|_{\ell_1(G)}^k.
\end{eqnarray}
The first inequality is the definition of $S$; the second inequality is the Hausdorff-Young inequality applied to $h$; the third inequality is the Cauchy-Schwarz inequality; the following equality is Parseval's theorem; and we then finish the chain by noting that $4^{-k} \leq 4^{-1}$, $|A+A| \leq K|A|$, and $1/2 - 2k\epsilon \leq 0$.

It remains to note, again with explanations afterwards, that
\begin{eqnarray*}
\|h\|_{\ell_1(G)}\|g\|_{\ell_1(G)}^k\int_{S}{|\wh{1_{A+A}}(\gamma)||\wh{1_A}(\gamma)|d\gamma} &\geq & \int_{S}{|\wh{h}(\gamma)||\wh{g}(\gamma)|^k|\overline{\wh{1_{A+A}}(\gamma)}||\wh{1_A}(\gamma)|d\gamma}\\  & \geq &
\left|\int_{S}{\wh{h}(\gamma)\wh{g}(\gamma)^k\overline{\wh{1_{A+A}}(\gamma)}\wh{1_A}(\gamma)d\gamma}\right|\\ & \geq & \left|\int{\wh{h}(\gamma)\wh{g}(\gamma)^k\overline{\wh{1_{A+A}}(\gamma)}\wh{1_A}(\gamma)d\gamma}\right|\\ & &  - \int_{\wh{G}\setminus S}{|\wh{h}(\gamma)||\wh{g}(\gamma)|^k|\wh{1_{A+A}}(\gamma)||\wh{1_A}(\gamma)|d\gamma}\\ & \geq  &\frac{1}{2}\|h\|_{\ell_1(G)}|A|\|g\|_{\ell_1(G)}^k - \frac{1}{4}\|h\|_{\ell_1(G)}|A|\|g\|_{\ell_1(G)}^k\\ & = & \frac{1}{4}\|h\|_{\ell_1(G)}|A|\|g\|_{\ell_1(G)}^k.
\end{eqnarray*}
The first inequality is the Hausdorff-Young inequality in $h$ and $g$; the second is the integral triangle inequality; the third is the triangle inequality; and the final inequality then inserts (\ref{eqn.base}) and (\ref{eqn.ots}).

Dividing out by $\|h\|_{\ell_1(G)}$ and $\|g\|_{\ell_1(G)}^k$ (both of which are non-zero since $g$ and $h$ are non-trivial) we see that
\begin{equation*}
\int_{S}{|\wh{1_{A+A}}(\gamma)||\wh{1_A}(\gamma)|d\gamma} \geq \frac{1}{4}|A|.
\end{equation*}
Let $V$ be any finite subgroup of $\Spec_{1/4K^{2\epsilon}}(g)^\perp$ and note that $\wh{1_V}(\gamma)=|V|$ for all $\gamma \in S$.  It follows that
\begin{eqnarray*}
\frac{1}{4}|A||V|^2 &\leq &\int_S{|\wh{1_{A+A}}(\gamma)||\wh{1_A}(\gamma)||\wh{1_V}(\gamma)|^2d\gamma}\\ & \leq & \int{|\wh{1_{A+A}}(\gamma)||\wh{1_A}(\gamma)||\wh{1_V}(\gamma)|^2d\gamma} \leq |A||A+A||V|
\end{eqnarray*}
by the Hausdorff-Young inequality in $1_{A+A}$ and $1_A$, and Parseval's theorem in $1_V$. We conclude that $|V|\leq 4K|A|$.  It follows that $\Spec_{1/4K^{\epsilon}}(g)^\perp$ is finite and hence satisfies the required bound.
\end{proof}

\section{Proof of the main theorem}

Before proving our main result we need to record one more ingredient.  In \cite{pet::} Petridis found a fantastic new proof of Pl{\" u}nnecke's inequality \cite{plu::} (see also \cite{ruz::5}) which proceeded via the following lemma.
\begin{lemma}[Petridis' lemma {\cite[Proposition 2.1]{pet::}}]  Suppose that $A,B \subset G$ are finite sets with $|A+B| \leq K|B|$, and $Z \subset B$ is non-empty with $|A+Z|/|Z|$ minimal.  Then
\begin{equation*}
|A+Z+C| \leq K|Z+C| \text{ for all finite }C \subset G.
\end{equation*}
\end{lemma}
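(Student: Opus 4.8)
The plan is to prove the slightly stronger inequality $|A+Z+C| \le K'|Z+C|$ for every finite $C \subset G$, where $K' := |A+Z|/|Z|$; since $Z$ minimises $|A+S|/|S|$ over non-empty $S \subseteq B$ and $B$ itself is a candidate, $K' \le |A+B|/|B| \le K$, so this yields the lemma. The only property of $Z$ I will use is the one that defines it: $|A+S| \ge K'|S|$ for every non-empty $S \subseteq B$.

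I would argue by induction on $|C|$. The case $|C| = 1$ is immediate and in fact an equality, since translating gives $|A+Z+c| = |A+Z| = K'|Z| = K'|Z+c|$. For the inductive step, write $C = C' \cup \{x\}$ with $x \notin C'$, assume the bound for $C'$, and decompose $A+Z+C = (A+Z+C') \,\sqcup\, \big((A+Z+x) \setminus (A+Z+C')\big)$. The device that makes everything work is the auxiliary set $Z_x := \{z \in Z : z + x \in Z + C'\}$. From its definition one gets immediately that $A + Z_x + x \subseteq A+Z+C'$, so $A+Z_x+x$ is disjoint from $(A+Z+x)\setminus(A+Z+C')$ inside $A+Z+x$, whence
\[
\big|(A+Z+x)\setminus(A+Z+C')\big| \;\le\; |A+Z+x| - |A+Z_x+x| \;=\; K'|Z| - |A+Z_x| \;\le\; K'(|Z| - |Z_x|),
\]
the last step being the minimality of $K'$ applied to $Z_x \subseteq B$ (and trivially true when $Z_x = \emptyset$). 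A short count — $(Z+x)\setminus(Z+C')$ is the translate of $Z \setminus Z_x$, so $|Z+C| = |Z+C'| + |Z| - |Z_x|$ — converts $K'(|Z|-|Z_x|)$ into $K'\big(|Z+C| - |Z+C'|\big)$, and combining this with the inductive hypothesis $|A+Z+C'| \le K'|Z+C'|$ gives $|A+Z+C| \le K'|Z+C|$, closing the induction.

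The single non-obvious idea, and the point I would be most careful to set up correctly, is the choice of $Z_x$: it must be rigged so that simultaneously $A + Z_x + x$ is forced inside $A+Z+C'$ (which gives the disjointness bound on the ``new'' part of $A+Z+C$) and its complement $Z \setminus Z_x$ has size exactly the increment $|Z+C| - |Z+C'|$. It is slightly counterintuitive that minimality of $K'$ helps here, since it produces a \emph{lower} bound $|A+Z_x| \ge K'|Z_x|$ — but that is exactly what is needed, because $|A+Z_x|$ enters with a minus sign. Everything else (the set-theoretic bookkeeping, translation-invariance of cardinality, and the reduction $K' \le K$) is routine.
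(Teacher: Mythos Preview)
Your argument is correct and is exactly Petridis' original induction on $|C|$ via the auxiliary set $Z_x$. Note, however, that the paper does not supply its own proof of this lemma: it quotes the statement from \cite[Proposition 2.1]{pet::} and explicitly says ``We shall not discuss the proof of this here'', so there is nothing in the paper to compare against beyond observing that what you have written is the standard Petridis proof.
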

We shall not discuss the proof of this here, although it inspired the proof of Lemma \ref{lem.statcov}.  Indeed, the genesis of this note centred around trying to use Petridis' arguments to give a proof of Ruzsa's conjecture.  That approach failed, at least in part because Petridis' arguments actually work just as well for non-Abelian groups as they do for Abelian groups, and Ruzsa's conjecture is essentially Abelian.  

Finally, then, we turn to our proof.
\begin{proof}[Proof of Theorem \ref{thm.main}]
Let $Z \subset A$ be such that $|A+Z|/|Z|$ is minimal.  In particular, $|A+Z| \leq K|Z|$ and $|Z+Z| \leq K|Z|$.  We apply Proposition \ref{prop.int} to $Z$ with a parameter $\epsilon$ (to be optimised later, ending up being $\Omega(1/\log K)$) to get a subgroup $V$ generated by at most $O(K\epsilon^{-2}\min\{\log r,\log 2\epsilon^{-1}\})$ elements and a non-negative function $f\not \equiv 0$ with support on $Z+V$ such that
\begin{equation*}
\|f - \tau_z(f)\|_{\ell_1(G)} \leq \epsilon \|f\|_{\ell_1(G)}
\end{equation*}
for at least $\Omega(\epsilon |Z|)$ elements $z \in Z$; call the set of such $z$s $Z'$.  Thus 
\begin{equation*}
|Z'+Z'| \leq |Z+Z| \leq K|Z| =O(\epsilon^{-1}K|Z'|).
\end{equation*}
Now apply Proposition \ref{prop.int} again, but this time to the set $Z'$ with a parameter $\eta$ (again, to be optimised later, but this time it will end up being $\Omega(1/r)$).  This gives us a subgroup $V'$ generated by at most $O(K\epsilon^{-1}\eta^{-2}\min\{\log r,\log 2\eta^{-1}\})$ elements and a non-negative function $g \not \equiv 0$ with support on $Z'+V'$ such that
\begin{equation*}
\|g - \tau_z(g)\|_{\ell_1(G)} \leq \eta \|g\|_{\ell_1(G)}
\end{equation*}
for at least $\Omega(\eta |Z'|)$ elements $z \in Z'$; call the set of such $z$s $Z''$. We shall return to $Z''$ later, but now we turn to showing that $g$ has large Fourier coefficients.

Let $h:=f \ast \mu_{V'}$ \emph{i.e.}
\begin{equation*}
h(x):=\int{f(x-y)d\mu_{V'}(y)} = \int{\tau_{-y}(f)(x)d\mu_{V'}(y)} \text{ for all } x \in G.
\end{equation*}
It follows immediately that $h$ is invariant under translation by elements of $V'$.  Suppose that $z \in Z'+V'$, so that $z=z'+v'$ where $z'\in Z'$ and $v'\in V'$.  Then using the fact that $h$ is invariant under translation by elements of $V'$; linearity of $\tau$; the integral Minkowski inequality; and finally the isometry of $\tau$ we get that
\begin{eqnarray*}
\|h - \tau_z(h)\|_{\ell_1(G)}& =& \|h - \tau_{z'}(\tau_{v'}(h))\|_{\ell_1(G)}\\ &= & \|h - \tau_{z'}(h)\|_{\ell_1(G)}\\ & = & \left\|\int{\tau_{-y}(f)d\mu_{V'}(y)}- \tau_{z'}\left(\int{\tau_{-y}(f)d\mu_{V'}(y)}\right)\right\|_{\ell_1(G)}\\ & = & \left\|\int{\tau_{-y}(f-\tau_{z'}(f))d\mu_{V'}(y)}\right\|_{\ell_1(G)}\\ & \leq & \int{\|\tau_{-y}(f-\tau_{z'}(f))\|_{\ell_1(G)}d\mu_{V'}(y)}\\ & = & \int{\|f-\tau_{z'}f\|_{\ell_1(G)}d\mu_{V'}(y)}\leq  \epsilon \|f\|_{\ell_1(G)}.
\end{eqnarray*}
To summarise:
\begin{equation*}
\|h - \tau_z(h)\|_{\ell_1(G)} \leq \epsilon \|f\|_{\ell_1(G)} \text{ for all }z \in Z'+V'
\end{equation*}
Additionally $h$ is supported on $Z+V+V'$, and by Petridis' Lemma we have that
\begin{eqnarray*}
|(Z+V+V') + (Z+V+V')| & \leq & |(A+V+V') + (Z+V+V')|\\ & = & |A+Z+(V+V')| \leq K|Z+V+V'|.
\end{eqnarray*}
Take $\epsilon =1/4\log 2K$ and apply Lemma \ref{lem.big} to the set $Z+V+V'$ and the functions $h$ and $g$ to get that
\begin{eqnarray*}
|\Spec_{1/4\sqrt{e}}(g)^\perp|&\leq &4K|Z+V+V'|\\ & \leq & 4K|Z| |V| |V'|\\ & = & \exp(O(K(\log 2K)((\log 2K)(\log r)+\eta^{-2})))|A|.
\end{eqnarray*}
Now, if we put $\eta=1/4r\sqrt{e}$ then by Lemma \ref{lem.annihilate} we have that $Z'' \subset \Spec_{1/4r\sqrt{e}}(g)^\perp$.  On the other hand $Z'' \subset Z' \subset Z$ and so
\begin{equation*}
|A+Z''| \leq K|Z| =O( K\epsilon^{-1}\eta^{-1}|Z''|).
\end{equation*}
Let $Z''' \subset Z''$ be such that $|A+Z'''|/|Z'''|$ is minimal and put $V''':=\langle Z'''\rangle \subset \langle Z''\rangle \Spec_{1/4r\sqrt{e}}(g)^\perp$ so that
\begin{equation*}
|V'''| \leq  \exp(O(K(\log 2K)((\log 2K)\log r+r^2)))|A|,
\end{equation*}
and note that by Petridis' lemma we have
\begin{eqnarray*}
|A+V'''| = |A+Z'''+V'''| & =& O( K\epsilon^{-1}\eta^{-1}|Z'''+V'''|)\\  & = & O(K\epsilon^{-1}\eta^{-1}|V'''|)=O(Kr(\log 2K)|V'''|).
\end{eqnarray*}
It follows that $A+V'''$ contains at most $O(Kr\log 2K)$ cosets of $V'''$ and hence
\begin{equation*}
|\langle A \rangle| \leq r^{O(Kr\log 2K)}|V'''| = \exp(O(K(\log 2K)((\log 2K)(\log r) + r^2)))|A|
\end{equation*}
as required.
\end{proof}

\bibliographystyle{halpha}

\bibliography{references}

\end{document}